\newtheorem{thm}{Theorem}
\newtheorem{lem}{Lemma}
\newtheorem{cor}{Corollary}
\numberwithin{equation}{section}
\newcommand{\regsym}{\textsuperscript{\tiny\textregistered}}
\newcommand{\trademark}{\textsuperscript{\tiny\texttrademark}}
\newcommand{\F}{\mathbb{F}}
\newcommand{\Fq}{\F_q}
\def\Rad{\mathop{\mathrm{Rad}}}
\title{Linear combinations of primitive elements of a finite field}
\author{ Stephen D. Cohen \\
  School of Mathematics and Statistics, \\
  University of Glasgow, Scotland \\
  stephen.cohen@glasgow.ac.uk
\and  
   Tom\'{a}s Oliveira e Silva \\
  Departamento de Electr{\'o}nica, Telecomunica{\c c}{\~o}es e Inform{\'a}tica / IEETA \\
  University of Aveiro, Portugal \\
  tos@ua.pt
  \and
  Nicole Sutherland \\
  Computational Algebra Group, \\
  School of Mathematics and Statistics, \\
  University of Sydney, Australia \\
  nicole.sutherland@sydney.edu.au
\and
  Tim Trudgian\footnote{Supported by Australian Research Council Future Fellowship FT160100094.} \\
School of Physical, Environmental and Mathematical Sciences\\ The University of New South Wales Canberra, Australia \\
  t.trudgian@adfa.edu.au
}
\begin{document}

\maketitle

\begin{abstract}
  \noindent We examine linear sums of primitive roots and their inverses in finite fields. In particular, we refine a result by Li and Han, and show that every $p> 13$ has a pair of primitive roots $a$ and $b$ such that $a+ b$ and $a^{-1} + b^{-1}$ are also primitive roots mod $p$.

\end{abstract}


\section{Introduction} \label{intro}
Let $\mathbb{F}_q$ denote the finite field of order $q$, a power of the prime $p$. The proliferation of primitive elements of $\mathbb{F}_q$ gives rise to many interesting properties. For example, it was proved in \cite{COT} that for any non-zero $\alpha, \beta, \epsilon \in  \mathbb{F}_q$ the equation $
\epsilon = a \alpha + b \beta$
is soluble in primitive elements $a, b$ provided that $q>61$. Since $a$ is primitive if and only if $a^{-1}$, its multiplicative inverse in $\mathbb{F}_q$, is primitive, one may look for linear relations amongst primitive elements and their inverses and, as in the above example, seek a lower bound on $q$ beyond which such relations hold --- this is the purpose of the current paper.

Given
 arbitrary non-zero elements $u, v \in\mathbb{F}_q$, call a pair ($a,b$) of primitive elements of $\mathbb{F}_q$ \emph{$(u,v)$-primitive}
 if  additionally the elements $ua+vb$  and $va^{-1}+ub^{-1}$ are each primitive.  The task is to
  find an asymptotic expression for $N=N(q,u,v)$, defined as  the number of $(u,v)$-primitive pairs  $(a, b)$ in $\mathbb{F}_q$.

In the situation in which  $\mathbb{F}_q$ is a prime field, i.e., $q=p$, this problem was introduced by Li and Han \cite{LiHa}.  In that context,  $a, b$ are considered
to be integers in $I_{p}= \{1,2, \ldots, p-1\}$ with inverses  $a^{-1}, b^{-1} \in I_p$.    Similarly, $u, v$ can be taken to be in $I_p$.
To state the result of \cite{LiHa} we introduce some notation.  For a positive integer $m$ let $\omega(m)$ be the number of distinct prime divisors of $m$ and
$W(m)=2^{\omega(m)}$ be the number of square-free divisors of $m$.
Further, define $\theta(m)$ as $\phi(m)/m$, where $\phi$ is Euler's function, and
$\tau(m)=\prod_{l|m}\left(1-\frac{1}{l-1}+\frac{1}{(l-1)^2}\right)$, where the product is taken over all $\omega(m)$ distinct prime divisors $l$ of $m$.
\begin{thm} [Li--Han]
\label{sheep}
Let $p$ be an odd prime and $n$ any integer in $I_p$.  Set  $\theta =\theta(p-1)$, $\tau= \tau(p-1)$ and $W=W(p-1)$.  Then

\begin{equation}
 \label{goat}
\left|N(p,1,n)-\theta^3\tau\cdot (p-1)^2\right| \leq 5 \theta^4 W^4  p^{3/2}.
\end{equation}

\end{thm}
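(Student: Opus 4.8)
The plan is to estimate $N(p,1,n)$ using the standard characteristic-function machinery for primitive elements, expressed through multiplicative characters of $\F_p^*$. Recall that an element $x \in \F_p^*$ is primitive precisely when its order equals $p-1$, and the indicator of this event can be written via the Vinogradov-type formula
\begin{equation}
\label{indicator}
\rho(x) = \theta(p-1) \sum_{d \mid p-1} \frac{\mu(d)}{\phi(d)} \sum_{\mathrm{ord}(\chi)=d} \chi(x),
\end{equation}
where the inner sum runs over multiplicative characters $\chi$ of order exactly $d$. The quantity $N(p,1,n)$ counts pairs $(a,b)$ for which all four of $a$, $b$, $a+nb$, and $na^{-1}+b^{-1}$ are primitive, so I would write $N$ as a sum over $a,b \in \F_p^*$ of the product of four such indicators $\rho(a)\rho(b)\rho(a+nb)\rho(na^{-1}+b^{-1})$, being careful to exclude the degenerate cases where $a+nb=0$ or $na^{-1}+b^{-1}=0$ (these contribute a negligible, controllable amount).

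Expanding the four indicators via \eqref{indicator} produces a quadruple sum over divisors $d_1,d_2,d_3,d_4$ of $p-1$ and, for each, a sum over characters $\chi_1,\chi_2,\chi_3,\chi_4$ of those orders, weighted by $\theta^4 \prod_i \mu(d_i)/\phi(d_i)$. The heart of the matter is the resulting inner character sum
\begin{equation}
\label{charsum}
S(\chi_1,\chi_2,\chi_3,\chi_4) = \sum_{a,b} \chi_1(a)\,\chi_2(b)\,\chi_3(a+nb)\,\chi_4(na^{-1}+b^{-1}),
\end{equation}
over the admissible $a,b$. The principal-character term, where all four $\chi_i$ are trivial, gives the main term: counting the admissible pairs yields $(p-1)^2$ up to lower-order adjustments, and after accounting for which divisors $d_i$ can support a nontrivial contribution one recovers the factor $\tau(p-1)$ through the arithmetic identity built into its definition. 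The remaining terms, where at least one character is nontrivial, must all be shown to be small.

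The main obstacle, as expected, is bounding the nontrivial character sums in \eqref{charsum}. I would clear denominators by substituting $a^{-1}+b^{-1}$ with $(a+b)/(ab)$-type manipulations so that \eqref{charsum} becomes a character sum over a curve; concretely, writing $c = na^{-1}+b^{-1} = (na + b)/(ab)$ and absorbing the inverse into the character, the expression reduces to a sum of the form $\sum \chi_1'(a)\chi_2'(b)\chi_3(a+nb)\chi_4(na+b)$, which is a two-variable character sum attached to a product of linear forms. The standard tool here is Weil's bound (via the Riemann Hypothesis for curves over $\Fq$), which gives cancellation of size $O(p)$ per nontrivial sum, with an implied constant counting the number of distinct linear factors—hence the appearance of powers of $W=W(p-1)$ once one sums the absolute values over all characters of each order $d_i$, using that there are $\phi(d_i)$ characters of order $d_i$ and $\sum_{d\mid p-1}|\mu(d)| = W$. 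The delicate points are verifying that the relevant curve is absolutely irreducible (so Weil applies nontrivially) whenever not all characters are principal, isolating precisely the combination of principal and non-principal characters that survives to form the $\tau$-factor, and bookkeeping the degenerate pairs; combining these with the error accumulated over all $d_i$ yields the stated bound $5\,\theta^4 W^4 p^{3/2}$.
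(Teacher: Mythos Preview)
The paper does not itself prove Theorem~\ref{sheep}; it is quoted from Li--Han and then superseded by the sharper Theorem~\ref{bull}, whose proof in \S\ref{buffet} is the relevant comparison. Your outline has the right skeleton---expand four copies of $\lambda_{q-1}$ and bound the resulting character sums $S$---but two of the decisive steps are misidentified.

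First, the main term $\theta^3\tau(p-1)^2$ does \emph{not} arise from the single contribution with all four $\chi_i$ trivial; that term alone gives only $\theta^4(p-1)(p-2)$, with no $\tau$. In the paper's argument the factor $\tau$ comes from the full ``diagonal'' family $\chi_1=\chi_2=\chi_4=\chi_3^{-1}$ ranging over every square-free order $d\mid q-1$: each such choice yields $S=(q-1)(q-2)$, and summing $\mu(d)^4/\phi(d)^3$ over $d$ is exactly what produces $\tau/\theta$ (this is (\ref{piggy1})). Your sketch gives no mechanism for this diagonal to emerge, so the stated main term is never actually recovered.

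Second, your appeal to a two-variable Weil bound of size $O(p)$ is not valid here. After clearing denominators one has $na^{-1}+b^{-1}=(a+nb)/(ab)$, so the fourth factor agrees with the third up to the monomial $ab$; there are only three distinct linear forms, and your displayed $\chi_4(na+b)$ is a slip. More importantly, the resulting two-variable sum is homogeneous: the substitution $a\mapsto ab$ (as at (\ref{piglet})) separates off a free sum $\sum_b(\chi_1\chi_2\chi_3\chi_4^{-1})(b)$, forcing $\chi_4=\chi_1\chi_2\chi_3$ and leaving a one-variable Jacobi-type sum bounded by $\sqrt{q}$. Thus the surviving $S$ satisfy $|S|\le(q-1)\sqrt{q}$, i.e.\ order $q^{3/2}$, not $q$; a generic Deligne-type $O(p)$ bound fails precisely because the sum factors. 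This factorisation is what produces both the correct error exponent $p^{3/2}$ and, since only three characters remain free, the paper's improvement from $W^4$ to $W^3$.
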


 Li and Han gave the following as  corollaries to Theorem \ref{sheep}.
\begin{cor}[Li--Han]\label{lamb}
Every sufficiently  large $p$ has primitive roots $a$ and $b$  such that both $a+b$ and $a^{-1}+b^{-1} $ are also primitive. Also,
every sufficiently  large $p$ has primitive roots $a$ and $b$ such that both $a-b$ and $b^{-1}-a^{-1} $ are also primitive.
\end{cor}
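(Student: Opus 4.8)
The plan is to deduce both assertions directly from the estimate \eqref{goat} by making two judicious choices of the parameter $n$. For the first assertion I would take $n=1$: a $(1,1)$-primitive pair is precisely a pair of primitive roots $a,b$ for which $a+b$ and $a^{-1}+b^{-1}$ are primitive, so $N(p,1,1)$ counts exactly the pairs sought. For the second assertion I would take $n=p-1$, i.e. $v\equiv-1\pmod p$; then $ua+vb=a-b$ and $va^{-1}+ub^{-1}=b^{-1}-a^{-1}$, so $N(p,1,p-1)$ counts the pairs required there. In each case it suffices to show that $N(p,1,n)>0$ for all sufficiently large $p$.

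By Theorem \ref{sheep} we have
\begin{equation*}
N(p,1,n)\ \ge\ \theta^3\tau(p-1)^2-5\theta^4W^4p^{3/2}\ =\ \theta^3\bigl(\tau(p-1)^2-5\theta W^4p^{3/2}\bigr).
\end{equation*}
Since $\theta=\phi(p-1)/(p-1)\le1$ and $\theta^3>0$, the positivity of $N$ reduces to the single inequality
\begin{equation*}
\tau(p-1)^2\ >\ 5W^4p^{3/2}.
\end{equation*}

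The crux, and where I expect the only genuine difficulty, is the uniform control of the two arithmetic factors $\tau=\tau(p-1)$ and $W=W(p-1)$: both depend erratically on the factorisation of $p-1$ and are \emph{not} bounded by absolute constants. The point I would exploit is that each behaves sub-polynomially in $p$. Each factor of $\tau(m)$ equals $1-x+x^2$ with $x=1/(l-1)\in(0,1]$, and $1-x+x^2\ge 3/4$ on $[0,1]$, whence the crude bound $\tau(p-1)\ge(3/4)^{\omega(p-1)}$; together with $W(p-1)=2^{\omega(p-1)}$ this gives $W^4/\tau\le (4/3)^{\omega(p-1)}\,2^{4\omega(p-1)}$. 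Invoking the standard estimate $\omega(m)=O(\log m/\log\log m)$, both $W^4$ and $1/\tau$ are $p^{o(1)}$, so
\begin{equation*}
\frac{5W^4 p^{3/2}}{\tau(p-1)^2}\ =\ p^{-1/2+o(1)}\ \longrightarrow\ 0 .
\end{equation*}
Hence the displayed inequality holds for all sufficiently large $p$, giving $N(p,1,n)>0$ and establishing both claims.

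The subtlety to watch is that one must resist bounding $\tau$ below by a positive constant, since the infinite product $\prod_l\bigl(1-\tfrac{1}{l-1}+\tfrac{1}{(l-1)^2}\bigr)$ over all primes diverges to $0$; the correct input is the sub-polynomial growth of $\omega(p-1)$ applied to the single field $p-1$ at a time. Because the $O(\cdot)$ estimate is ineffective, this argument yields only a qualitative ``sufficiently large'' threshold. Sharpening it to an explicit bound (as the paper ultimately does for the $a+b,\,a^{-1}+b^{-1}$ problem) would require replacing the asymptotic estimate for $\omega(p-1)$ by explicit inequalities for $W(p-1)$ and $\tau(p-1)$, reducing to finitely many exceptional $p$, and then settling those by direct computation.
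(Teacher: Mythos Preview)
Your argument is correct and is precisely the deduction the paper has in mind: Corollary~\ref{lamb} is simply attributed to Li--Han as a consequence of Theorem~\ref{sheep}, with no separate proof given, and your choices $n=1$ and $n\equiv-1$ together with the $p^{o(1)}$ bound on $W^4/\tau$ via $\omega(m)=O(\log m/\log\log m)$ constitute exactly the intended derivation. One minor quibble: the estimate $\omega(m)\ll\log m/\log\log m$ is in fact effective (explicit constants are classical, e.g.\ Robin), so your closing remark that the argument is inherently ineffective slightly overstates the case---though you are right that turning this into the sharp exceptional sets of Corollary~\ref{calf} requires the finer machinery developed later in the paper.
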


We establish an improved estimate for $N(q, u, v)$ in the case of a general finite field.

 \begin{thm}\label{bull} Let $q>2$ be a prime power.  Set $\theta= \theta(q-1), \tau=\tau(q-1), W=W(q-1)$.
 Then, for arbitrary non-zero $u, v \in \mathbb{F}_q$,

 \begin{equation}\label{horse}
\left| N(q,u,v) - \theta^3\tau\cdot(q-1)\  q\right| \leq \theta^4 W^3\cdot(q-1)\sqrt{q}.
  \end{equation}
 \end{thm}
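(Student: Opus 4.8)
\section{Proof proposal for Theorem \ref{bull}}

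The plan is to use the standard character-sum method for counting primitive (equivalently, $(q-1)$-free) elements of $\Fq$. Writing $\rho(x)=\theta\sum_{d\mid q-1}\frac{\mu(d)}{\phi(d)}\sum_{\mathrm{ord}(\chi)=d}\chi(x)$ for the characteristic function of the primitive elements (with $\theta=\theta(q-1)$ and $\rho(0)=0$), I express
\[
N(q,u,v)=\sum_{a,b\in\Fqs}\rho(a)\rho(b)\rho(ua+vb)\rho(va^{-1}+ub^{-1}),
\]
the terms with $ua+vb=0$ or $va^{-1}+ub^{-1}=0$ vanishing automatically. Substituting the four copies of $\rho$ produces a weighted sum over character quadruples $(\chi_1,\chi_2,\chi_3,\chi_4)$, with a factor $\theta^4\prod_i\mu(d_i)/\phi(d_i)$ where $d_i=\mathrm{ord}(\chi_i)$, of the inner sums $S=\sum_{a,b}\chi_1(a)\chi_2(b)\chi_3(ua+vb)\chi_4(va^{-1}+ub^{-1})$.

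The key simplification, and the source of the improvement over Li--Han, is the multiplicative identity $ua+vb=ab\,(va^{-1}+ub^{-1})$, which lets me replace $\chi_3(ua+vb)$ by $\chi_3(a)\chi_3(b)\chi_3(va^{-1}+ub^{-1})$. After the substitutions $x=a^{-1},\,y=b^{-1}$ and then $s=vx,\,t=uy$, each $S$ collapses to $\overline{\psi_1}(v)\overline{\psi_2}(u)$ times $\sum_{s+t\neq0}\psi_1(s)\psi_2(t)\psi_3(s+t)$, where $\psi_1=\overline{\chi_1\chi_3}$, $\psi_2=\overline{\chi_2\chi_3}$ and $\psi_3=\chi_3\chi_4$. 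This last sum factors as $(q-1)\,\mathbf{1}[\psi_1\psi_2\psi_3=1]\,\mathcal{J}(\psi_1,\psi_2)$, a single Jacobi sum $\mathcal{J}(\psi_1,\psi_2)=\sum_{s\neq0,1}\psi_1(s)\psi_2(1-s)$. In particular $S$ vanishes unless $\chi_4=\chi_1\chi_2\chi_3$, so the four characters are tied by a single relation and only three remain free.

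For the main term I isolate the dominant Jacobi value $\mathcal{J}(1,1)=q-2$, which occurs exactly when $\psi_1=\psi_2=1$, i.e.\ $\chi_1=\chi_2=\chi_4=\overline{\chi_3}$. Summing over $\chi_3$ gives $\theta^4(q-1)(q-2)\sum_\chi A(\chi)^4$ with $A(\chi)=\mu(\mathrm{ord}\,\chi)/\phi(\mathrm{ord}\,\chi)$; since $\sum_\chi A(\chi)^4=\prod_{l\mid q-1}\bigl(1+(l-1)^{-3}\bigr)$ and the factorisation $(l-1)^3+1=l\bigl((l-1)^2-(l-1)+1\bigr)$ converts $\theta^4\prod_l\bigl(1+(l-1)^{-3}\bigr)$ into $\theta^3\tau$, the dominant contribution is exactly $\theta^3\tau(q-1)(q-2)$. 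Replacing $(q-2)$ by $q$ to match the stated form introduces a further admissible term of size $q$.

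Everything else is error. The genuine error comes from the tuples with $\psi_1,\psi_2,\psi_1\psi_2$ all nontrivial, where the Weil bound gives $|\mathcal{J}(\psi_1,\psi_2)|\le\sqrt{q}$; the remaining ``secondary'' contributions (exactly one of $\psi_1$, $\psi_2$, $\psi_1\psi_2$ trivial) satisfy $|\mathcal{J}|\le1$ and so contribute only at size $q$, comfortably inside the target bound. To count the error tuples I use that the relation $\chi_4=\chi_1\chi_2\chi_3$ leaves three free characters and that $\sum_\chi|A(\chi)|=\sum_{d\mid q-1}|\mu(d)|=W$, so the number of contributing tuples is at most $W^3$, yielding $\theta^4 W^3(q-1)\sqrt{q}$. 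The main obstacle is the bookkeeping of these last two paragraphs: verifying that all secondary main terms of size $q$, together with the $(q-2)\to q$ adjustment, are dominated by $\theta^4 W^3(q-1)\sqrt{q}$, and tightening the tuple count so that the leading constant is exactly $1$ rather than the cruder constant produced by bounding each character sum in isolation.
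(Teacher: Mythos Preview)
Your proposal is correct and follows essentially the same route as the paper. The paper reaches the constraint $\chi_4=\chi_1\chi_2\chi_3$ via the substitution $a\mapsto ab$ in the double sum, which factors $S$ as $(q-1)\mathbf{1}[\chi_1\chi_2\chi_3\chi_4^{-1}=\chi_0]$ times a single Jacobi-type sum in $a$; your use of the identity $ua+vb=ab(va^{-1}+ub^{-1})$ is simply a more transparent packaging of the same manipulation, and the resulting case analysis (main term $\theta^3\tau(q-1)(q-2)$ from $\psi_1=\psi_2=1$, Weil bound $\sqrt{q}$ otherwise, $W^3$ weighted tuples from the three free characters with $|\mu(d_4)|/\phi(d_4)\le1$) matches the paper's exactly, including the final bookkeeping that absorbs the $(q-2)\to q$ shift and the secondary $O(q)$ terms into the stated error.
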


 \smallskip
 The principal improvement in  Theorem \ref{bull}  over Theorem \ref{sheep} is the reduction from $W^4$ to $W^3$ in the error term.
  Its effect can be described as follows.   Let $\mathcal{S}$ be the set of prime powers $q$ such that, for any pair
 of non-zero elements ($u,v$) in $\mathbb{F}_q$, there exists a $(u,v)$-primitive pair in $\mathbb{F}_q$.  Explicit calculations using
  (\ref{goat}) guarantee that all $q$ exceeding $5.7\times10^{364}$ (or with $\omega(q-1)>150$)  are in $\mathcal{S}$.
  On the other hand, using (\ref{horse}), we conclude that all prime powers $q$ exceeding $1.7 \times 10^{84}$ (or with $\omega(q-1) >46$)
  are in $\mathcal{S}$.

 For existence questions
 it is clear that the interest in Theorems \ref{sheep}
and \ref{bull} lies in their lower bounds.   Hence we shall describe a method that, while not delivering an asymptotic estimate,
 establishes a lower bound for $N(q,u,v)$. This yields a non-trivial lower bound applicable to a wider range of prime powers $q$.
    Let  $\Rad(m)$ be the {\em radical} of $m$, i.e., the
    product of the distinct primes dividing a positive integer $m$, and let $\Rad(q-1)$
 be expressed as $kp_1\cdots p_s$
 for some divisor $k$ and distinct primes $p_1, \ldots,p_s$.
 Define $\delta_4=1-4\sum_{i=1}^s\frac{1}{p_i}$.

 \begin{thm}\label{platinum}
Suppose $\delta_4 >0$. Set $\theta=\theta(k), \tau=\tau(k), W=W(k)$. Then
$$ N(q,u,v) \geq \delta_4  \theta^3\cdot(q-1)\{\tau\  q -\theta W^3\sqrt{q}\}.$$
\end{thm}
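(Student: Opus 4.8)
The plan is to deduce Theorem~\ref{platinum} from Theorem~\ref{bull} by a standard \emph{sieving} argument, trading the full primitivity requirement for a weaker ``partial primitivity'' condition that can be tested one prime at a time. Recall that $a\in\Fqs$ is primitive exactly when it is $r$-free for every prime $r\mid q-1$, where an element is called $e$-free (for $e\mid q-1$) if it is not a $d$th power for any divisor $d>1$ of $e$. The key observation is that Theorem~\ref{bull} is really a statement about elements that are $(q-1)$-free; inspecting its proof, the same character-sum machinery gives, for any divisor $e\mid q-1$, an estimate of the form
\begin{equation*}
\left| N_e(q,u,v) - \theta(e)^3\tau(e)\cdot(q-1)\,q\right| \leq \theta(e)^4 W(e)^3\cdot(q-1)\sqrt{q},
\end{equation*}
where $N_e$ counts quadruples that are simultaneously $e$-free in all four slots. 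I would first state this as the natural generalisation of Theorem~\ref{bull} (or simply note that its proof yields it verbatim with $q-1$ replaced by $e$).

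Writing $\Rad(q-1)=k\,p_1\cdots p_s$, the idea is to start from the count $N_k$ of quadruples that are $k$-free in each coordinate, and then \emph{sieve out} those that fail to be $p_i$-free for some $i$. The standard sieving inequality (due to Cohen--Huczynska) gives
\begin{equation*}
N(q,u,v)\;\ge\;\sum_{i=1}^{s}N_{kp_i}(q,u,v)\;-\;(s-1)\,N_k(q,u,v),
\end{equation*}
applied simultaneously in all four coordinates. Each $N_{kp_i}$ and $N_k$ is then replaced by its estimate from the generalised Theorem~\ref{bull}. The main term of $N_{kp_i}$ differs from that of $N_k$ by the factor $\theta(p_i)^3\tau(p_i)=(1-1/p_i)^3(1-1/(p_i-1)+1/(p_i-1)^2)$ in the relevant place; after factoring out $\theta(k)^3\tau(k)\cdot(q-1)q$, the combination of main terms produces the factor $1-\sum_{i=1}^s\bigl(1-\theta(p_i)^3\tau(p_i)\bigr)$, which I must show is bounded below by $\delta_4=1-4\sum 1/p_i$.

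The next step is purely the elementary inequality $1-\theta(r)^3\tau(r)\le 4/r$ for each prime $r$, which controls the per-prime loss and converts the sieve's main-term combination into the clean factor $\delta_4$. I would verify this directly: writing out $\theta(r)^3\tau(r)$ as a rational function of $r$ and checking that $r\bigl(1-\theta(r)^3\tau(r)\bigr)\le 4$ for all primes $r\ge 2$ (the worst case being small $r$, which is a finite check). For the error terms, the sieve contributes at most $\sum_i \theta(kp_i)^4 W(kp_i)^3 + (s-1)\theta(k)^4 W(k)^3$ copies of $(q-1)\sqrt q$; bounding $\theta(kp_i)\le\theta(k)$ and $W(kp_i)=2W(k)$ crudely and collecting constants, one checks this is dominated by $\delta_4\,\theta(k)^3\cdot\theta(k)W(k)^3$, giving the stated error $\delta_4\theta^3\cdot(q-1)\theta W^3\sqrt q$.

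I expect the main obstacle to be bookkeeping in the error term rather than any genuine difficulty: one must choose the crude bounds on $\theta(kp_i)$, $W(kp_i)$ and on the number of terms carefully enough that the accumulated error still fits under the single factor $\delta_4\theta W^3$ promised in the statement, and in particular that the positive contribution from $\delta_4\tau q$ genuinely dominates once $\delta_4>0$. The hypothesis $\delta_4>0$ is exactly what guarantees the sieved main term stays positive, so the whole argument is vacuous (but harmless) otherwise; the real content is arranging the constants so that the inequality holds with the precise coefficients $\delta_4\theta^3$ and $\delta_4\theta^4 W^3$ as written, which I would do last after the structure above is in place.
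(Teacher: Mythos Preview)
Your proposal has a genuine gap in the error term, and the issue is structural rather than bookkeeping. With your sieve $N\geq \sum_i N_{kp_i}-(s-1)N_k$ (all four slots sieved together), the error you accumulate is
\[
\sum_i \theta(kp_i)^4 W(kp_i)^3 + (s-1)\theta(k)^4 W(k)^3
\;\ge\;\bigl(8s\theta(p_s)^4 + s-1\bigr)\,\theta(k)^4 W(k)^3,
\]
since $W(kp_i)=2W(k)$. No amount of care with $\theta(kp_i)\le\theta(k)$ will bring this below $\delta_4\,\theta(k)^4 W(k)^3$: the coefficient grows like $9s$, while $\delta_4<1$. So your approach yields $N\ge \delta_4\theta^3\tau(q-1)q - C_s\,\theta^4 W^3(q-1)\sqrt q$ with $C_s$ of order~$s$, which is strictly weaker than the stated theorem and in particular does not give Corollary~\ref{diamond}.

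The paper's route is different in one crucial respect. It sieves the four coordinates \emph{separately}, using $4s$ sets:
\[
N_{q-1}\ \ge\ \sum_{i=1}^s\bigl(N(p_ik,k,k,k)+N(k,p_ik,k,k)+N(k,k,p_ik,k)+N(k,k,k,p_ik)\bigr)-(4s-1)N_k,
\]
and then observes (Lemma~\ref{gold}) that each difference $N(p_ik,k,k,k)-\theta(p_i)N_k$ is \emph{identically zero}. The reason is specific to this problem: in the character expansion, $S=0$ unless $\chi_1\chi_2\chi_3\chi_4^{-1}=\chi_0$, and when only $\chi_1$ has order divisible by $p_i$ this product can never be trivial. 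Hence $N_{q-1}\ge \delta_4 N_k$ exactly, with no error from the sieving step at all, and the stated bound follows by applying the Theorem~\ref{bull} argument with $k$ in place of $q-1$. Your elementary inequality $1-\theta(r)^3\tau(r)\le 4/r$ is correct and cute, but it treats the main term; the decisive point here is the vanishing that kills the error terms, and that is what your proposal misses.
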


A consequence of Theorem \ref{platinum} is that  all prime powers $q$
exceeding $6.9\times10^{10}$
are in $\mathcal{S}$.
A stronger conclusion, however,  can be drawn by introducing a subset $\mathcal{T} \subseteq \mathcal{S}$.

Define a single primitive element $a$ to be \emph{$(u,v)$-primitive} if, additionally, $ua+va^{-1}$ is primitive, and define $\mathcal{T}$ as the set of prime powers $q$
such that, for any pair  of non-zero elements ($u,v$) in $\mathbb{F}_q$, there exists a $(u,v)$-primitive element in $\mathbb{F}_q$ (in the above sense).
Easily, if $a$ is a  $(u,v)$-primitive element, then $(a,a^{-1})$ is a $(u,v)$-primitive pair so that, indeed, $\mathcal{T} \subseteq \mathcal{S}$.

For even $q$  the existence of $(1,1)$-primitive elements was the simpler topic\footnote{The harder problem treated in \cite{WaCaFe} and \cite{Co14} concerned the existence of  a $(1,1)$-primitive element in an extension
 field $\mathbb{F}_{q^n}$ which is also normal over the base field $\mathbb{F}_q$.} considered  by Wang, Cao and Feng in \cite{WaCaFe}. Their investigations were completed by Cohen \cite{Co14} --- see also the reference to \cite{Co87} at the end of Section \ref{existence}.
Results on the existence of $(1,1)$-primitive elements in
 $\mathbb{F}_q$  have recently been given by Liao, Li and Pu \cite{LiLiPu}. In this paper
  we use a sieving method and some computation  to establish the 
  following theorem.
 \begin{thm}\label{ox}  Define
 \begin{equation}\label{fridge}
 \begin{split}
 \mathcal{E_T}&=\{2,3,4,5,7,9,11,13,19,25,29,31,37,41,43,49,61,81,97,121,169\},\\
\mathcal{E_S} &= \{2,3, 4,5,7, 13\}.
\end{split}
 \end{equation}
 Then $\mathcal{E_T}$ is the set of prime powers \emph{not} in $\mathcal{T}$ and $\mathcal{E_S}$ is the set of prime powers \emph{not} in $\mathcal{S}$.
 \end{thm}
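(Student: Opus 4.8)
The plan is to establish both assertions — that $\mathcal{E_S}$ (respectively $\mathcal{E_T}$) is exactly the set of prime powers outside $\mathcal{S}$ (respectively $\mathcal{T}$) — by the same two-stage scheme, splitting each into a ``large $q$'' range handled by character-sum and sieve estimates and a ``small $q$'' range handled by direct computation. For the pair problem defining $\mathcal{S}$, Theorem~\ref{bull} already shows $N(q,u,v)>0$ whenever $\theta^3(q-1)\{\tau q-\theta W^3\sqrt q\}>0$, i.e. whenever $\sqrt q>\theta W^3/\tau$; since $q-1$ carries at least $\omega=\omega(q-1)$ distinct prime factors, this holds automatically once $\omega$ exceeds an explicit bound and, for each smaller $\omega$, fails only for $q$ below a bound depending on $\omega$. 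Theorem~\ref{platinum} sharpens this: writing $\Rad(q-1)=kp_1\cdots p_s$ and removing the small primes $p_1,\dots,p_s$ subject to $\delta_4>0$ replaces $W(q-1)$ by the much smaller $W(k)$, which is what lowers the unconditional threshold to the $6.9\times10^{10}$ quoted above and correspondingly lowers the $\omega$-threshold. The residual set of prime powers for which even the sieved inequality can fail is then finite and may be enumerated as $q=1+\prod p_i$ with a bounded number of bounded prime factors.

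For the single-element problem defining $\mathcal{T}$ I would first introduce the analogous counting function $M(q,u,v)=\#\{a\in\Fqs:\ a \text{ and } ua+va^{-1}\text{ are both primitive}\}$ and derive its estimate. Expressing the primitivity indicator through multiplicative characters and using $ua+va^{-1}=(ua^2+v)/a$, the governing sum collapses to $\sum_{a}\chi_1(a)\chi_2(ua^2+v)$, a character sum with a quadratic argument to which Weil's bound applies with a factor of order $2\sqrt q$. This yields an inequality of the shape $\lvert M(q,u,v)-\theta^2\tau' q\rvert\le c\,\theta^2 W^2\sqrt q$ with an explicit constant $c$ and a $\tau$-type main-term factor $\tau'$, from which a sieve entirely parallel to Theorem~\ref{platinum} produces a threshold and a finite candidate list exactly as for $\mathcal{S}$. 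Because every $(u,v)$-primitive element $a$ yields the $(u,v)$-primitive pair $(a,a^{-1})$, membership in $\mathcal{T}$ forces membership in $\mathcal{S}$, so the two computations can share infrastructure and any $q$ cleared for $\mathcal{T}$ is automatically cleared for $\mathcal{S}$.

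The final stage is the direct verification on the finite candidate lists. For each surviving $q$ I would run over all pairs $(u,v)\in(\Fqs)^2$ — reduced first by the scaling and Frobenius symmetries acting on the problem, so that only a system of representatives is tested — and, for $\mathcal{S}$, search the primitive elements of $\Fq$ for a $(u,v)$-primitive pair, declaring $q\notin\mathcal{S}$ as soon as some $(u,v)$ admits none; the analogous search with single elements settles $\mathcal{T}$. Carrying this out should reproduce precisely the lists in~(\ref{fridge}): each listed $q$ is exhibited as genuinely exceptional by a witnessing pair $(u,v)$ with no admissible configuration, while every other prime power below the sieve threshold is cleared.

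I expect the main obstacle to be twofold and to lie almost entirely in making the computation feasible rather than in any single estimate. First, driving the sieve thresholds low enough — especially for $\mathcal{T}$, whose error carries the less favourable $W^2\sqrt q$ balance against a smaller main term — requires a careful, case-by-case optimal choice of the sieving primes $p_1,\dots,p_s$, possibly iterated, so as to keep $\delta_4>0$ while shrinking $W(k)$; the boundary prime powers, where the inequality only just fails, are the delicate ones and are precisely the candidates near the top of the range such as $q=121$ and $q=169$. Second, the exhaustive search over all $(u,v)$ for every candidate $q$ is the real bulk of the work, and its correctness hinges on the symmetry reductions being exact and on the candidate enumeration being provably complete; confirming that no prime power outside~(\ref{fridge}) survives is what ultimately pins down both lists.
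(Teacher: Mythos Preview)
Your high-level architecture matches the paper's: character-sum estimate plus sieve to leave a finite candidate list for $\mathcal{T}$, direct check of that list, and then $\mathcal{T}\subseteq\mathcal{S}$ plus a separate check of $\mathcal{E_T}$ for $\mathcal{S}$. Two small theoretical corrections: the main term for $M(q,u,v)$ is $\theta^2(q-1-\varepsilon)$ with \emph{no} $\tau$-type factor (the only ``diagonal'' term is $\chi_1=\chi_2=\chi_0$, unlike the pair problem), and the sieve for $M$ runs with $\delta_2$, not anything parallel to $\delta_4$, since only two primitivity conditions are in play.

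The substantive gap is the computation, and you have misidentified where it bites. The hard candidates are not $q=121,169$ but the large ones: the paper's sieve leaves $3031$ prime powers, up to $q=31\,651\,621$ with $\omega(q-1)=8$. A search over all $(u,v)\in(\Fqs)^2$ times all primitive $a$, even after scaling and Frobenius reductions, is $\gtrsim q^2$ work per field and is hopeless at that size. The paper's first idea is to set $w=u^{-1}v$, $r=a+wa^{-1}$, so that $ua+va^{-1}=ur$ is primitive iff $\log u\not\equiv-\log r\pmod{p_i}$ for every $p_i\mid q-1$; thus for fixed $w$ one only needs $\log u$ to range over residues mod $R=\Rad(q-1)$, and the question becomes whether the sets $L_r=\{\ell\bmod R:\gcd(\ell+\log r,\,q-1)=1\}$, as $a$ ranges over primitive elements, cover all of $\Z/R\Z$. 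Even with this reduction, the direct version (Algorithm~1) cost about $20$ CPU-years for the $\omega(q-1)=6$ candidates alone. The remaining $182$ values with $\omega(q-1)\in\{7,8\}$ required a second idea (Section~9): represent each $L_r$ by its fibres $L'_{r,i}\subseteq\Z/p_i\Z$ as tiny bit-arrays, and count $|L_{r_1}\cup\cdots\cup L_{r_n}|$ via inclusion--exclusion through products of $|L'_{r_j,i}\cap\cdots|$, with an acceptance heuristic on new $r$'s to keep the term count under control. Without these two algorithmic devices the verification does not terminate; they, rather than any refinement of the sieve constants, are what actually closes the proof.
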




This generalises and resolves completely the problem posed by Li and Han in \cite{LiHa}. From Theorem \ref{ox} we can easily deduce the following, which resolves completely the `sufficiently large' of Corollary \ref{lamb}.

 \begin{cor} \label{calf} Let $q$ be a prime power.
 \begin{enumerate} [label=\emph{(\roman*)}]
 \item Suppose $q \not \in \{2,3,4,5,7,9,13,25,121\}$. Then there is  a primitive element $a$ in $\mathbb{F}_q$  such that $a+ a^{-1}$ is  primitive.
 \item Suppose $q \not \in \{2,3,4,5,9,13,25,61,121\}$. Then there is  a primitive element $a$ in $\mathbb{F}_q$  such that $a-a^{-1}$ is  primitive.
 \item  Suppose $q \not \in \{2, 3, 4, 5, 7, 13\}$. Then there are primitive elements $a$ and $b$ in $\mathbb{F}_q$  such that both $a+b$ and $a^{-1}+b^{-1} $ are also primitive.
 \item Suppose $q \not \in \{2, 3, 4, 5, 13\}$. Then there are primitive elements $a$ and $b$ in $\mathbb{F}_q$  such that both $a-b$ and $b^{-1}-a^{-1} $ are also primitive.
 \end{enumerate}
 \end{cor}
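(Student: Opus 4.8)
The plan is to derive all four parts directly from Theorem \ref{ox} by specialising the pair $(u,v)$, and then to dispose of a short residual list of prime powers by direct computation. The guiding observation is that each part asks only about one \emph{specific} linear combination, whereas membership in $\mathcal{S}$ or $\mathcal{T}$ concerns \emph{all} nonzero pairs $(u,v)$. Consequently the exclusion set in each part of the corollary is contained in the corresponding exceptional set $\mathcal{E_T}$ or $\mathcal{E_S}$ of Theorem \ref{ox}, and the overwhelming majority of cases is already settled.

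First I would record the relevant specialisations. For the single-element statements (i) and (ii) take $(u,v)=(1,1)$ and $(u,v)=(1,-1)$ respectively: a $(1,1)$-primitive element is a primitive $a$ with $a+a^{-1}$ primitive, and a $(1,-1)$-primitive element is a primitive $a$ with $a-a^{-1}$ primitive. For the pair statements (iii) and (iv) take the same two choices: a $(1,1)$-primitive pair gives $a+b$ and $a^{-1}+b^{-1}$ primitive, while a $(1,-1)$-primitive pair gives $a-b$ and $b^{-1}-a^{-1}$ primitive. These match the four claims verbatim. Now for every $q\notin\mathcal{E_T}$ we have $q\in\mathcal{T}$ by Theorem \ref{ox}, so the required single element exists for both $(1,1)$ and $(1,-1)$; this establishes (i) and (ii) for all $q$ outside $\mathcal{E_T}$, hence outside their respective (smaller) exclusion lists. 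Likewise, for every $q\notin\mathcal{E_S}$ we have $q\in\mathcal{S}$, which gives (iii) for all $q\notin\mathcal{E_S}=\{2,3,4,5,7,13\}$ with no further work, and gives (iv) for all $q\notin\mathcal{E_S}$.

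What then remains is a finite check on exactly those prime powers that lie in the full exceptional set but outside the corollary's exclusion list. For (i) these are $\{11,19,29,31,37,41,43,49,61,81,97,169\}$; for (ii) they are $\{7,11,19,29,31,37,41,43,49,81,97,169\}$; for (iii) the residual list is empty; and for (iv) it is the single value $q=7$. For each such $q$ one fixes the relevant pair $(u,v)\in\{(1,1),(1,-1)\}$, enumerates the primitive elements of $\mathbb{F}_q$, and exhibits one element (or one pair) whose prescribed combination is again primitive.

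Since these prime powers are all small, the verification is entirely routine and I would not grind through it here. The only point that genuinely needs care --- and the main thing to get right --- is the bookkeeping that separates the cases settled outright by Theorem \ref{ox} from the handful requiring hand computation, together with the fact (which is what makes the corollary's exclusion sets strictly smaller than $\mathcal{E_T}$ and $\mathcal{E_S}$) that a given $q$ may fail the ``all $(u,v)$'' condition while still admitting the particular combination demanded here. The value $q=7$ is the clean illustration: it lies in $\mathcal{E_S}$, so it is excluded from (iii), yet the $(1,-1)$ pair does exist, so it is correctly admitted in (iv).
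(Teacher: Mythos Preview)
Your proposal is correct and follows essentially the same route as the paper: deduce all four parts from Theorem~\ref{ox} by specialising $(u,v)$ to $(1,1)$ or $(1,-1)$, then dispose of the finitely many residual $q$ in $\mathcal{E_T}$ or $\mathcal{E_S}$ by direct check. The paper phrases the last step as ``checking which $(u,v)$ caused failures in Algorithms~\ref{algo_Te} and~\ref{algo_S}'', which amounts to exactly the residual verification you describe.
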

The outline of this paper is as follows. In \S \ref{prelim} we introduce some notation that, in \S \ref{buffet}, allows us to prove Theorem \ref{bull}. In \S \ref{pairs} we introduce a sieve and prove Theorem \ref{platinum}. In \S \ref{skew} we introduce some asymmetry  and prove Theorem \ref{emerald}, which is sometimes stronger in practice than Theorem \ref{platinum}. In \S \ref{earl} we prove Theorems \ref{lioncub} and \ref{tiger}, which are criteria for membership of $\mathcal{T}$. Finally, in \S \ref{existence} and \S\ref{comp_res} we present theoretical and computational results that prove Theorem~\ref{ox}.

   \section{Preliminaries}\label{prelim}
   To set  Theorem \ref{platinum} and subsequent results in context,  we introduce an extension of the concept of a primitive element in $\mathbb{F}_q$.
     Let $e$ be a divisor of $q-1$.    Then a non-zero element
   $a \in \mathbb{F}_q$ is defined to be {\em $e$-free} if $a=b^d$, where $b \in \mathbb{F}_q$ and $d|e$, implies $d=1$.  This property depends only on $\Rad(e)$.
   In particular, $a$ is primitive if and only if it is $(q-1)$-free.

   Given $e|q-1$, the characteristic function $\lambda_e$ for the subset of $e$-free elements of $\mathbb{F}_q^*$ is expressed in terms of the multiplicative
   characters of $\mathbb{F}_q$
 and is given by
$$ \lambda_e(a) =\theta(e) \sum_{d|e} \frac{\mu(d)}{\phi(d)}\sum_{\chi  \in \Gamma_{d}} \chi(a). $$
Here $\Gamma_{d}$ denotes the set  of  $\phi(d)$ multiplicative characters of  order $d$.
Consistent with dependence only on $\Rad(e)$ is the fact that the only non-zero contributions to
$\lambda_e(a)$  can arise from \emph{square-free} values of $d$: we can assume throughout  that every value of $d$ considered is square-free.
Finally, we generalise  the definition of $\delta_4$ used in Theorem \ref{platinum}.

\begin{equation} \label{delta}
\delta _j = \delta_j(p_1,\ldots,p_s)=1-j\sum_{i=1}^s\frac{1}{p_s}.
\end{equation}
Specifically, in the sequel, we shall employ $\delta_4, \delta_3$ and $\delta_2$.

   \section{Asymptotic estimate for $(u,v)$-primitive pairs}\label{buffet}

In this section we prove Theorem \ref{bull}. Assume that $q$ and non-zero elements $u,v$ of
   $\mathbb{F}_q$ are given.    Writing $\lambda=\lambda_{q-1}$, we have from \S \ref{prelim} that
$$N:=N(q,u,v)= \sum_{a,b \neq 0} \lambda(a)\lambda(b)\lambda(ua+vb)\lambda(va^{-1}+ub^{-1}). $$
   Hence
  \begin{equation} \label{pig}
   N= \theta^4 \sum_{\substack{d_j|q-1,\\j=1,\ldots, 4}} \frac{\mu(d_1)\mu(d_2)\mu(d_3)\mu(d_4)}{\phi(d_1)\phi(d_2)\phi(d_3)\phi(d_4)}\sum_{\substack{\chi_j \in \Gamma_j\\j=1,\ldots,4}}S,
   \end{equation}
   where
   \begin{eqnarray}\label{piglet}
    S&=&  \sum_{a\neq 0}\sum_{b\neq 0} \chi_1(a)\chi_2(b) \chi_3(ua+bv)\chi_4(va^{-1}+ub^{-1}) \nonumber\\
     &=& \sum_{a\neq 0}\sum_{b\neq 0} \chi_1(ab) \chi_2(b)\chi_3(uab+vb)\chi_4(v a^{-1}b^{-1}+ub^{-1}) \nonumber\\
       &=& \sum_{a \neq 0}\sum_{b\neq 0}\chi_1\chi_2\chi_3\chi_4^{-1}(b) \chi_{1}\chi_{4}^{-1}(a) \chi_{3}\chi_{4}(ua+v).
\end{eqnarray}

  If  $\chi_1\chi_2\chi_3\chi_4^{-1}\neq \chi_0$, the principal character, then the sum over $b$ in (\ref{piglet}) is zero, whence $S=0$.
  So in what follows assume  $\chi_1\chi_2\chi_3\chi^{-1}_4= \chi_0$.

   If $\chi_1\chi_4^{-1}=\chi_3\chi_4=\chi_0$, then  $\chi_1=\chi_2=\chi_4=\chi_3^{-1}$ and $S=(q-1)(q-2)$.
   Hence $d_1=d_2=d_3=d_4$ and,    as in \cite[p.\ 7]{LiHa},  the contribution of all such terms in (\ref{pig}) to $N$ is
\begin{equation}\label{piggy1}
  \theta^4 \cdot(q-1)(q-2) \sum_{d|q-1}\frac{\mu^4(d)}{\phi^3(d)} =\theta^3 \tau\cdot(q-1)(q-2).
  \end{equation}

  If $\chi_1\chi_4^{-1}=\chi_0$ (so that $\chi_2\chi_3=\chi_0$) but $\chi_3 \chi_4 \neq \chi_0$, then
  $$S= -(q-1) \sum_{a \neq 0}\chi_3\chi_4(ua+v) =-\chi_3\chi_4(v)(q-1),$$ so that $|S|=q-1$.   Similarly,
  $|S|=q-1$ when $\chi_3\chi_4=\chi_0$ but  $\chi_1\chi_4^{-1}\neq \chi_0$.

  Finally, if $\eta_1=\chi_1\chi_4^{-1}\neq \chi_0$ and $\eta_2 = \chi_3 \chi_4 \neq \chi_0$,
  then
  \[S=(q-1)\sum_{a\neq 0}\eta_1(a) \eta_2(ua+v)= \eta_1\eta_2(v)\eta_1(-1/u)(q-1)J(\eta_1, \eta_2), \]
  where $J$ denotes the Jacobi sum, so that $|S|= (q-1)\sqrt{q}$.

  We now obtain a bound for $|M|$, where $M$ is the sum of terms in (\ref{pig}) corresponding to characters of square-free orders with $\chi_4=\chi_1\chi_2\chi_3$
  excluding those with $\chi_1=\chi_2 =\chi_4=\chi_3^{-1}$ (which were accounted for in (\ref{piggy1})). Thus, we sum over all characters $\chi_1, \chi_2, \chi_3$
  and allow $\chi_4$ to be defined by $\chi_4=\chi_1\chi_2\chi_3$, in which case $d_4$ is the degree of the resulting character $\chi_4$. In general,
  $d_4$ is not determined by $d_1,d_2,d_3$ so we simply use the bound $\phi(d_4) \geq 1$.
    For simplicity, we
  use the bound $|S| \leq (q-1)= (q-1)\sqrt{q}- (q-1)( \sqrt{q}-1)$ whenever $\chi_1=\chi_4$ (so $\chi_2\chi_3=\chi_0$) and include terms with $\chi_1=\chi_2 =\chi_4=\chi_3^{-1}$,
  but the bound $|S| \leq (q-1) \sqrt{q}$, otherwise. Thus

  \begin{equation}\label{piggy2}
  |M| \leq \theta^4 (W^3 \cdot(q-1)\sqrt{q} -|L|),
\end{equation}
where $L$ accounts for the discrepancy in terms with $\chi_1=\chi_4$ so that
$$|L|=\sum_{d_1, d_2|(q-1)}\frac{|\mu(d_1)|\mu^2(d_2)}{\phi(d_1)\phi^2(d_2)}\sum_{\substack { \chi_1\in \Gamma_{d_1}\\ \chi_2 \in \Gamma_{d_2}}}(q-1) (\sqrt{q}-1).$$

Hence
\begin{equation}\label{piggy3}
|L| = (q-1)(\sqrt{q}-1)W\sum_{d|q-1}\frac{\mu(d)^{2}}{\phi(d)}=\frac{W(q-1)(\sqrt{q}-1)}{\theta}.
\end{equation}

Combining (\ref{piggy1}), (\ref{piggy2}), (\ref{piggy3}) we deduce that

\begin{equation}\label{hog}
|N -\theta^3\tau \cdot (q-1)(q-2)| \leq \theta^3\cdot(q-1)\{\theta W^3 \sqrt{q}-W(\sqrt{q}-1)\},
\end{equation}

  The implicit upper bound in (\ref{horse}) is immediate from (\ref{hog}). The lower bound  follows
  from (\ref{hog}) since $2\tau < W\cdot(\sqrt{q}-1)$.

 \begin{cor}\label{cow}
 The prime power $q$ is in $\mathcal{S}$ whenever $q >W^6(q-1)$.
 \begin{proof}By looking at the factors  from each prime $l|q-1$ we see that $\tau(q-1)>\theta(q-1)$.
 \end{proof}
 \end{cor}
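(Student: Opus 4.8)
The plan is to obtain the Corollary directly from the lower bound supplied by Theorem \ref{bull}. Because the estimate (\ref{horse}) is uniform over the pairs $(u,v)$, it is enough to show that its lower bound is strictly positive: this forces $N(q,u,v) > 0$ simultaneously for all non-zero $u,v$, which is exactly membership of $q$ in $\mathcal{S}$. Unwinding (\ref{horse}) I would write
\[
N(q,u,v) \;\geq\; \theta^3\tau\,(q-1)q - \theta^4 W^3(q-1)\sqrt q \;=\; \theta^3(q-1)\bigl(\tau q - \theta W^3\sqrt q\bigr).
\]
Since $\theta^3(q-1) > 0$, the sign of the right-hand side is that of the bracket, so after dividing by $\sqrt q$ the bound is positive precisely when $\tau\sqrt q > \theta W^3$.

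The one substantive step is then to replace $\tau$ by the simpler quantity $\theta$ via the inequality $\tau(q-1) > \theta(q-1)$. I would prove this by comparing the two Euler-type products factor by factor over the primes $l \mid q-1$: the $\theta$-factor is $1 - \tfrac1l$ and the $\tau$-factor is $1 - \tfrac1{l-1} + \tfrac1{(l-1)^2}$. Clearing denominators turns the desired factorwise inequality into $l(l^2 - 3l + 3) > (l-1)^3$, that is $l^3 - 3l^2 + 3l > l^3 - 3l^2 + 3l - 1$, which holds for every prime $l$ (with the two sides differing by exactly $1$). As all factors involved are positive, multiplying the factorwise inequalities yields $\tau > \theta$.

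With $\tau > \theta > 0$ in hand, the positivity criterion $\tau\sqrt q > \theta W^3$ is guaranteed as soon as $\sqrt q \geq W^3$, since then $\theta W^3 \leq \theta\sqrt q < \tau\sqrt q$. Finally, the hypothesis gives $q > W^6(q-1) \geq W^6$ (using $q-1 \geq 1$), hence $\sqrt q > W^3$, and the Corollary follows. I do not anticipate a genuine obstacle here: the argument is a short deduction from Theorem \ref{bull}, and the only place demanding care is the factorwise comparison establishing $\tau > \theta$, where one must confirm the cleared-denominator inequality at each prime --- including $l = 2$, where the $\tau$-factor equals $1$ and the $\theta$-factor equals $\tfrac12$ --- and check that strictness survives the product.
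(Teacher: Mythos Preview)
Your argument is correct and follows exactly the paper's approach: derive positivity of the lower bound in (\ref{horse}) from the factorwise inequality $\tau(q-1)>\theta(q-1)$, which you verify by the identity $l(l^2-3l+3)-(l-1)^3=1$. One minor notational point: in the paper $W^6(q-1)$ denotes $(W(q-1))^6=W^6$, not $W^6\cdot(q-1)$, so your closing step ``$W^6(q-1)\geq W^6$ using $q-1\geq 1$'' is superfluous (though harmless, since you only use $q>W^6$ anyway).
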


 \section{Sieving for $(u,v)$-primitive pairs}\label{pairs}

We now introduce the sieving machinery and prove Theorem \ref{platinum}, which is an improvement on Theorem \ref{bull}.
   As in \S \ref{intro}, write $\Rad(q-1)=kp_1\cdots p_s$, where $p_1, \ldots, p_s$ are the
 \emph{sieving primes}.  For divisors $e_1,\ldots,e_4$ of $q-1$, denote by
 $N(e_1,e_2,e_3,e_4)$ the number of non-zero pairs $a,b \in \mathbb{F}_q$ for which, respectively,
 $a, b,ua+v b,va^{-1}+ub^{-1}$ are $e_1,e_2,e_3,e_4$-free. When $e_1=e_2=e_3=e_4=e$ abbreviate $N(e_1,e_2,e_3,e_4)$ to $N_e$.
 In particular, $N=N_{q-1}$.

 \begin{lem}\label{hen}
We have
 $$ N_{q-1} \geq \sum_{i=1}^s\{N(p_ik,k,k,k)+N(k,p_ik,k,k) +N(k,k,p_ik,k) +N(k,k,k,p_ik)\} -(4s-1)N_k.$$

 Hence, with  $\delta_4$ defined by $(\ref{delta})$,
\begin{equation} \label{goose}
\begin{split}
  N_{q-1} \geq &\sum_{i=1}^s\{[N(p_ik,k,k,k)-\theta(p_i)N_k]+[N(k,p_ik,k,k)-\theta(p_i)N_k]\\ &+[N(k,k,p_ik,k)-\theta(p_i)N_k]
 +[N(k,k,k,p_ik)-\theta(p_i)N_k]\}
 +\delta_4 N_k.
 \end{split}
  \end{equation}
  \end{lem}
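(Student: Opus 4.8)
The plan is to prove Lemma~\ref{hen} by a standard inclusion--exclusion (sieve) argument relating the count of fully primitive pairs to counts where one coordinate is only guaranteed $k$-free while one sieving prime is added back to a single factor. First I would record the basic monotonicity fact that underlies everything: if $e\mid e'$ then every $e'$-free element is $e$-free, so $N(e_1,e_2,e_3,e_4)$ is monotone in each argument, and in particular $N_{q-1}=N(q-1,q-1,q-1,q-1)$. Since $\Rad(q-1)=kp_1\cdots p_s$, an element is $(q-1)$-free exactly when it is $k$-free \emph{and} $p_i$-free for every sieving prime $p_i$; thus being $(q-1)$-free in a given coordinate is the conjunction of being $k$-free with the $s$ conditions of $p_i$-freeness.

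The combinatorial heart is a Bonferroni-type inequality. Working inside the set counted by $N_k$ (all four coordinates $k$-free), the quantity $N_{q-1}$ removes those pairs failing $p_i$-freeness in at least one of the $4s$ coordinate/prime slots. By the union bound in its inclusion--exclusion lower-bound form, the number of pairs surviving all $4s$ conditions is at least $N_k$ minus the sum over the $4s$ bad events of the count that fails that single condition. Now $N(p_ik,k,k,k)$ counts pairs that are $p_ik$-free in the first coordinate and $k$-free elsewhere, i.e.\ that already survive the $(i,\text{coord }1)$ condition on top of the baseline $N_k$; so $N_k-N(p_ik,k,k,k)$ is precisely the number of baseline pairs that \emph{fail} the $p_i$-freeness of the first coordinate. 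Writing each of the $4s$ "failure" counts in this way and substituting into the lower bound gives
\begin{equation*}
N_{q-1}\ge N_k-\sum_{i=1}^s\sum_{t=1}^4\bigl(N_k-N_{(i,t)}\bigr)=\sum_{i=1}^s\bigl\{N(p_ik,k,k,k)+N(k,p_ik,k,k)+N(k,k,p_ik,k)+N(k,k,k,p_ik)\bigr\}-(4s-1)N_k,
\end{equation*}
where $N_{(i,t)}$ denotes the count with $p_ik$ inserted in coordinate $t$; the coefficient $(4s-1)$ arises because the $4s$ terms each contribute $-N_k$ but the four grouped sums and the leading $+N_k$ together leave $N_k-4s\,N_k=-(4s-1)N_k$. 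That establishes the first displayed inequality.

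For the second inequality I would simply regroup algebraically: inside the sum over $i$, write each of the four terms as $[N(\cdots p_ik\cdots)-\theta(p_i)N_k]+\theta(p_i)N_k$. The four copies of $\theta(p_i)N_k$ sum to $4\theta(p_i)N_k=(4/p_i)\cdot\tfrac{p_i-1}{p_i}\cdot\frac{p_i}{p_i-1}$\,--- more directly, $\theta(p_i)=1-1/p_i$ is not what appears; rather one uses that $4\theta(p_i)$ should be replaced so that summing over $i$ produces $\sum_i 4(1/p_i)N_k$, matching $\delta_4=1-4\sum_i 1/p_i$. Collecting, $-(4s-1)N_k+\sum_{i=1}^s 4\,(\text{the reinserted term})=\bigl(1-4\sum_{i=1}^s\tfrac1{p_i}\bigr)N_k=\delta_4 N_k$, after the four bracketed differences are carried along unchanged. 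I expect the main obstacle to be purely bookkeeping: getting the coefficient $(4s-1)$ and the reindexing into the $\delta_4 N_k$ term exactly right, and being careful that the terms being split off as $\theta(p_i)N_k$ are arranged so that their total is $\bigl(4\sum 1/p_i\bigr)N_k$ and hence the residual baseline coefficient collapses to the clean constant $\delta_4$. No analytic estimate is needed here; the inequality is a formal consequence of inclusion--exclusion plus monotonicity of $e$-freeness in $e$.
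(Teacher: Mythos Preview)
Your approach is correct and is exactly the standard sieve argument that the paper intends (the paper states Lemma~\ref{hen} without proof, as is customary for this Bonferroni-type step). The first displayed inequality is derived cleanly.

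The second paragraph, however, is garbled and at one point appears to contradict itself. The identity you need is simply this: since $\theta(p_i)=\phi(p_i)/p_i=1-1/p_i$, one has
\[
\sum_{i=1}^s 4\,\theta(p_i)\;=\;4s-4\sum_{i=1}^s\frac{1}{p_i},
\]
so that adding and subtracting $\theta(p_i)N_k$ in each of the $4s$ terms gives
\[
\sum_{i,t}N_{(i,t)}-(4s-1)N_k
=\sum_{i,t}\bigl[N_{(i,t)}-\theta(p_i)N_k\bigr]
+\Bigl(4s-4\sum_{i}\tfrac{1}{p_i}-(4s-1)\Bigr)N_k
=\sum_{i,t}\bigl[N_{(i,t)}-\theta(p_i)N_k\bigr]+\delta_4 N_k.
\]
Your sentence ``$\theta(p_i)=1-1/p_i$ is not what appears'' is wrong and should be deleted; that value \emph{is} what appears, and once you use it the bookkeeping is immediate. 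Replace the confused passage with the two-line computation above and the proof is complete.
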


As with previous applications of the sieving method, we need an estimate for the various differences appearing in (\ref{goose}).  Somewhat surprisingly, in this instance,
they vanish.
\begin{lem}\label{gold}
For $i=1, \ldots,s$,
\begin{equation*}
N(p_ik,k,k,k)-\theta(p_i)N_k=0.
\end{equation*}
Similarly, the other differences in $ (\ref{goose})$ vanish.
\end{lem}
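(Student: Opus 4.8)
The plan is to expand each difference into multiplicative character sums and exploit the vanishing criterion already established for the sum $S$ in (\ref{piglet}). First I would record the key structural fact that, since $\Rad(q-1)=kp_1\cdots p_s$ is square-free, each sieving prime $p_i$ is coprime to $k$. Splitting the square-free divisors of $p_ik$ into those dividing $k$ and those of the form $p_id$ with $d\mid k$, the character expansion of $\lambda_{p_ik}$ from \S\ref{prelim} yields
$$\lambda_{p_ik}(a)-\theta(p_i)\lambda_k(a)=\theta(p_ik)\sum_{d\mid k}\frac{\mu(p_id)}{\phi(p_id)}\sum_{\chi_1\in\Gamma_{p_id}}\chi_1(a),$$
where I have used $\theta(p_ik)=\theta(p_i)\theta(k)$ together with $\lambda_k(a)=\theta(k)\sum_{d\mid k}\frac{\mu(d)}{\phi(d)}\sum_{\chi\in\Gamma_d}\chi(a)$. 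The decisive observation is that every character $\chi_1$ occurring here lies in $\Gamma_{p_id}$, and so has order divisible by $p_i$.

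Next I would substitute this into
$$N(p_ik,k,k,k)-\theta(p_i)N_k=\sum_{a,b\neq 0}\bigl[\lambda_{p_ik}(a)-\theta(p_i)\lambda_k(a)\bigr]\lambda_k(b)\lambda_k(ua+vb)\lambda_k(va^{-1}+ub^{-1}),$$
and expand the three remaining factors $\lambda_k$ into characters $\chi_2,\chi_3,\chi_4$ of orders dividing $k$. After collecting the character coefficients, the resulting inner double sum over $a,b$ is exactly the sum $S$ of (\ref{piglet}), since the arguments $ua+vb$ and $va^{-1}+ub^{-1}$ coincide with those treated in \S\ref{buffet}. By the computation there, $S=0$ unless $\chi_1\chi_2\chi_3\chi_4^{-1}=\chi_0$.

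The heart of the argument is then a coprimality contradiction. As $\chi_2,\chi_3,\chi_4$ all have order dividing $k$, the character $\chi_4\chi_3^{-1}\chi_2^{-1}$ has order dividing $k$ and hence order coprime to $p_i$; but $\chi_1$ has order divisible by $p_i>1$. Consequently $\chi_1\chi_2\chi_3\chi_4^{-1}=\chi_0$ can never hold, every contributing $S$ vanishes, and the difference is $0$. For the remaining three differences the identical mechanism applies: in each of $N(k,p_ik,k,k)$, $N(k,k,p_ik,k)$ and $N(k,k,k,p_ik)$ exactly one of the four characters (namely $\chi_2$, $\chi_3$ or $\chi_4$ respectively) is forced to have order divisible by $p_i$ while the other three have order dividing $k$, so the relation $\chi_1\chi_2\chi_3\chi_4^{-1}=\chi_0$ is again impossible. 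I expect the only point requiring care to be the bookkeeping that confirms $p_i\nmid k$ and that each $\chi_1$ genuinely has order divisible by $p_i$; once this is in place, the vanishing is immediate from the already-established behaviour of $S$.
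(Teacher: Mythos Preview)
Your proposal is correct and follows essentially the same route as the paper's proof: both expand the difference into character sums, isolate the contributions where $\chi_1$ has order divisible by $p_i$ while $\chi_2,\chi_3,\chi_4$ have orders dividing $k$, and then invoke the vanishing of $S$ from (\ref{piglet}) because $\chi_1\chi_2\chi_3\chi_4^{-1}$ can never be principal. Your write-up is somewhat more explicit in deriving the identity for $\lambda_{p_ik}-\theta(p_i)\lambda_k$, but the substance is identical to the paper's one-paragraph argument.
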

\begin{proof}
As in (\ref{pig})
\begin{equation} \label{dog}
N(p_ik,k,k,k)-\theta(p_i)N_k=\theta(p_i)\theta^4(k) \sum_{\substack{d_j|k,\\j=1,\ldots, 4}} \frac{\mu(p_id_1)\mu(d_2)\mu(d_3)\mu(d_4)}{\phi(p_id_1)\phi(d_2)\phi(d_3)\phi(d_4)}
   \sum_{ \substack{\chi_1\in \Gamma_{p_id_1}\\\chi_j\in \Gamma_{d_j}\\ j=2,\ldots,4}}S,
 \end{equation}
where $S$ is given by (\ref{piglet}).  Now, in every character sum $S$ appearing in (\ref{dog}), $\chi_1\chi_2\chi_3\chi_4^{-1}$ has degree divisible
by $p_i$ (since this is so for $\chi_1$, but not any of $\chi_2,\chi_3, \chi_4$), whence $S=0$.
\end{proof}
 Since, by Lemmas \ref{hen} and \ref{gold}, we have $N(q,u,v)=N_{q-1}\geq \delta_4N_k$, the argument of Theorem \ref{bull} (based on (\ref{piggy2}), (\ref{piggy3}) and
 (\ref{hog}) but with  $k$ instead of $q-1$) yields Theorem \ref{platinum}.

As a consequence of Theorem \ref{platinum} we deduce an extension  of Corollary \ref{cow}.

\begin{cor}\label{diamond}
Suppose $\delta_4 >0$.
 Then the  prime power $q$ is in $\mathcal{S}$ whenever $q >W^6(k)$.
 \end{cor}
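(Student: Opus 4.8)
The plan is to read the conclusion straight off Theorem \ref{platinum}, reducing it to the same elementary local estimate already invoked in Corollary \ref{cow}. Since $\delta_4 > 0$ by hypothesis, and the factors $\theta^3 = \theta^3(k)$ and $q-1$ are positive, the lower bound of Theorem \ref{platinum} gives $N(q,u,v) > 0$ — equivalently $q \in \mathcal{S}$ — as soon as the bracketed factor is positive, that is, as soon as $\tau q > \theta W^3 \sqrt{q}$. Dividing through by $\sqrt{q}$, this is exactly $\sqrt{q} > \theta W^3 / \tau$, so the entire task collapses to controlling the ratio $\theta(k)/\tau(k)$.

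First I would record the local comparison $\theta(k) \leq \tau(k)$. Both quantities are products over the distinct primes $l \mid k$, with $\theta(k) = \prod_{l \mid k}(1 - 1/l)$ and $\tau(k) = \prod_{l \mid k}\bigl(1 - \tfrac{1}{l-1} + \tfrac{1}{(l-1)^2}\bigr)$, and for each prime $l$ one has $1 - 1/l < 1 - \tfrac{1}{l-1} + \tfrac{1}{(l-1)^2}$, since this rearranges to $\tfrac{1}{(l-1)^2} > \tfrac{1}{l(l-1)}$, i.e. $l > l-1$. This is precisely the inequality $\tau > \theta$ used in the proof of Corollary \ref{cow}. Hence $\theta(k)/\tau(k) \leq 1$, with equality only in the degenerate case $k = 1$.

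Combining these, the hypothesis $q > W^6(k)$ yields $\sqrt{q} > W^3 \geq (\theta/\tau) W^3$, so indeed $\sqrt{q} > \theta W^3 / \tau$ and the bracket in Theorem \ref{platinum} is strictly positive, establishing $q \in \mathcal{S}$. There is no genuine obstacle here: the corollary is a one-line consequence of Theorem \ref{platinum} once $\theta/\tau$ is bounded by $1$. The only point requiring any care is the edge case $k = 1$, where $\theta = \tau = W = 1$ and the factor-by-factor inequality degenerates to an equality; there one must lean on the strictness of $q > W^6 = 1$ itself, which already forces $\tau q - \theta W^3 \sqrt{q} = \sqrt{q}(\sqrt{q} - 1) > 0$.
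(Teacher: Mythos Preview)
Your proof is correct and follows exactly the route the paper intends: apply Theorem \ref{platinum} and reduce to the factor-by-factor inequality $\theta(k) < \tau(k)$, which is precisely the content of the proof of Corollary \ref{cow} (the paper's own proof of Corollary \ref{diamond} is simply ``As for Corollary \ref{cow}''). Your added remark on the degenerate case $k=1$ is a harmless extra observation the paper does not bother to isolate.
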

 \begin{proof} As for Corollary \ref{cow}.
 \end{proof}

 Of course, the assumption $\delta_4>0$  is critical for the deduction of  Corollary \ref{diamond}.  Once this holds, unusually (because of Lemma \ref{gold}),
 the criterion does not depend on $\delta_4$.

\section{An asymmetric sieve}\label{skew}

We now obtain a result, in Theorem \ref{emerald}, that is sometimes, though not always, stronger than Theorem \ref{platinum}. We do this by considering some asymmetrical situations in \S \ref{pairs}.
\begin{lem}\label{crab} With notation as in \S $\ref{pairs}$ set $\theta=\theta(k), \tau=\tau(k), W=W(k)$.    Further, write $\theta_{q-1}$ for $\theta(q-1)$
and $N_{k,q-1}$ for $N(k,k,k,q-1)$.
Then

$$ N_{k,q-1}\geq \theta^2\theta_{q-1} \cdot(q-1)\{\tau\  q -\theta W^3\sqrt{q}\}.$$
\end{lem}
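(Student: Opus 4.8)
The plan is to mimic the character-sum computation that underlies Theorem \ref{bull}, but now with an asymmetry in the fourth coordinate: the first three quantities $a, b, ua+vb$ are required to be only $k$-free, whereas $va^{-1}+ub^{-1}$ is required to be fully $(q-1)$-free. Accordingly I would write
\[
N_{k,q-1}=\sum_{a,b\neq 0}\lambda_k(a)\,\lambda_k(b)\,\lambda_k(ua+vb)\,\lambda_{q-1}(va^{-1}+ub^{-1}),
\]
and expand each characteristic function via the formula for $\lambda_e$ from \S\ref{prelim}. This produces, exactly as in (\ref{pig}), a fourfold sum over square-free $d_1,d_2,d_3\mid k$ and $d_4\mid q-1$, weighted by $\theta^3(k)\,\theta_{q-1}$ and by the usual $\mu/\phi$ factors, of the inner character sum $S$ given by (\ref{piglet}).

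The key observation is that the entire analysis of $S$ carried out in \S\ref{buffet} is purely formal in the characters $\chi_1,\dots,\chi_4$ and does not use which divisor of $q-1$ each order divides. Thus the case split is identical: $S=0$ unless $\chi_1\chi_2\chi_3\chi_4^{-1}=\chi_0$; the main term $S=(q-1)(q-2)$ arises exactly when $\chi_1=\chi_2=\chi_4=\chi_3^{-1}$, forcing $d_1=d_2=d_3=d_4$, and since all of $d_1,d_2,d_3$ divide $k$ this common value is a divisor of $k$. Hence the main term here is $\theta^2(k)\,\theta_{q-1}\,\tau(k)\cdot(q-1)(q-2)$, which already exceeds the claimed $\theta^2\theta_{q-1}\tau\cdot(q-1)q$ term after the customary comparison. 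The remaining terms all satisfy $|S|\le(q-1)\sqrt{q}$ (via the Jacobi-sum bound), with the sharper bound $|S|\le q-1$ available when $\chi_1=\chi_4$, just as before.

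For the error estimate I would repeat the bookkeeping of (\ref{piggy2})--(\ref{piggy3}): summing the $\mu/\phi$ weights over all $\chi_1,\chi_2,\chi_3$ and bounding $\phi(d_4)\ge 1$ yields a factor $W^3(k)$, so the error is at most $\theta^3(k)\,\theta_{q-1}\cdot W^3(k)\,(q-1)\sqrt{q}$ minus the $L$-type discrepancy from the $\chi_1=\chi_4$ terms; as in Theorem \ref{bull} this discrepancy absorbs the difference between $(q-1)(q-2)$ and the cleaner $(q-1)q$ main term using $2\tau<W(\sqrt{q}-1)$. Combining these gives the stated bound $N_{k,q-1}\ge\theta^2\theta_{q-1}\cdot(q-1)\{\tau\,q-\theta W^3\sqrt{q}\}$. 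The main thing to watch is the bookkeeping of the $\theta$-factors: three copies of $\theta(k)$ from the $k$-free conditions and one $\theta_{q-1}$ from the $(q-1)$-free condition, with one power of $\theta(k)$ being consumed by the $\sum_d \mu^2(d)/\phi(d)=1/\theta(k)$ identity in the main term—so I would track these carefully to confirm the final exponents $\theta^2\theta_{q-1}$ rather than re-deriving the character analysis, which transfers verbatim.
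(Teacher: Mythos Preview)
Your proposal is correct and follows essentially the same route as the paper. The paper's proof is terser: it writes out the expansion (\ref{swine}), observes that whenever $d_4\nmid k$ the character $\chi_4$ cannot equal $\chi_1\chi_2\chi_3$ (whose order divides $k$), hence $S=0$, so $d_4$ may be restricted to divisors of $k$; it then simply invokes the argument of Theorem~\ref{platinum} (i.e.\ Theorem~\ref{bull} with $k$ in place of $q-1$). Your proposal reaches the same reduction implicitly, by summing over $\chi_1,\chi_2,\chi_3$ with $\chi_4=\chi_1\chi_2\chi_3$ determined, and then carries out the (\ref{piggy2})--(\ref{piggy3}) bookkeeping explicitly. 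One small inaccuracy of phrasing: the main term $\theta^2\theta_{q-1}\tau\cdot(q-1)(q-2)$ does not ``already exceed'' $\theta^2\theta_{q-1}\tau\cdot(q-1)q$; rather, as you correctly note later, the $L$-discrepancy together with $2\tau<W(\sqrt{q}-1)$ is what absorbs the gap.
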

\begin{proof} As at (\ref{pig})

\begin{equation}
\label{swine}
 N_{k,q-1}=\theta^3\theta_{q-1} \sum_{\substack{d_1,d_2,d_3|k,\\d_4|q-1}} \frac{\mu(d_1)\mu(d_2)\mu(d_3)\mu(d_4)}{\phi(d_1)\phi(d_2)\phi(d_3)\phi(d_4)}
   \sum_{ \substack{\chi_j\in \Gamma_{d_j}\\ j=1,\ldots,4}}S,
\end{equation}
   where $S$ is given by (\ref{piglet}) and so is zero unless $\chi_4=\chi_1\chi_2\chi_3$.  Now, if $d_4 \nmid k$ then the degree of $\chi_4$
   is not a divisor of $k$ and hence $\chi_4 \neq \chi_1\chi_2\chi_3$, whence $S=0$.  It follows that, in (\ref{swine}), we can restrict $d_4$ to divisors of
   $k$. The lemma then follows as in the proof of Theorem \ref{platinum}.
   \end{proof}
We may take $k= \Rad(q-1)$ in Lemma \ref{crab} to obtain another proof of the lower bound of Theorem \ref{bull}.

   The (obvious) asymmetric version of Lemma \ref{hen} features $\delta_3$ in place of $\delta_4$.
   \begin{lem} We have
   $$ N_{q-1} \geq \sum_{i=1}^s\{N(p_ik,k,k,q-1)+N(k,p_ik,k,q-1) +N(k,k,p_ik,q-1) \} -(3s-1)N_{k,q-1}.$$

 Hence, with $\delta_3$ defined by $(\ref{delta})$
\begin{equation} \label{turkey}
\begin{split}
  N_{q-1} \geq &\sum_{i=1}^s\{[N(p_ik,k,k,q-1)-\theta(p_i)N_{k,q-1}]+[N(k,p_ik,k,q-1)-\theta(p_i)N_{k,q-1}]\\&+[N(k,k,p_ik,q-1)-\theta(p_i)N_{k,q-1}]\}
 +\delta_3 N_{k,q-1}.
 \end{split}
  \end{equation}
  \end{lem}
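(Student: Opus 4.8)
The final statement to prove is an asymmetric version of Lemma~\ref{hen}, giving the sieve inequality~(\ref{turkey}) featuring $\delta_3$ in place of $\delta_4$. The key structural difference from Lemma~\ref{hen} is that the fourth slot is held fixed at $q-1$ throughout (so the fourth coordinate $va^{-1}+ub^{-1}$ is always required to be fully primitive), and only the first three slots are subjected to the sieve over the primes $p_1,\dots,p_s$. This is why only $3s$ sieve terms appear and why $\delta_3=1-3\sum_{i=1}^s 1/p_i$ is the relevant quantity rather than $\delta_4$.

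The plan is to mirror the proof of Lemma~\ref{hen} exactly, adapting the inclusion-exclusion bookkeeping from four sieved coordinates to three. First I would recall the standard sieve inequality: for the $e$-free conditions, requiring $(p_ik)$-freeness in a given slot is a stronger condition than $k$-freeness, and the count $N_{q-1}=N(q-1,q-1,q-1,q-1)$ of fully primitive pairs can be underestimated by a union-type bound. Writing each of the first three coordinates as needing to be $(q-1)$-free, one uses that being $(q-1)$-free is implied by being $p_ik$-free for every sieving prime $p_i$ (given the coordinate is already $k$-free), and applies the elementary inequality $N_{q-1}\ge \sum_{i=1}^s\{N(p_ik,k,k,q-1)+N(k,p_ik,k,q-1)+N(k,k,p_ik,q-1)\}-(3s-1)N_{k,q-1}$. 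This is the first displayed inequality in the statement and follows from the same Bonferroni/union-bound reasoning used implicitly in Lemma~\ref{hen}, now with three sieved slots and the fourth pinned at $q-1$, so that the baseline subtracted is $N_{k,q-1}=N(k,k,k,q-1)$ rather than $N_k$.

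Second, I would convert this into the form~(\ref{turkey}) by the purely algebraic regrouping used between the two displays of Lemma~\ref{hen}. For each sieving prime $p_i$ and each of the three sieved slots, I subtract and add $\theta(p_i)N_{k,q-1}$, collecting the $3s$ bracketed differences $[N(p_ik,k,k,q-1)-\theta(p_i)N_{k,q-1}]$ and its two analogues. The leftover constant multiple of $N_{k,q-1}$ is then $(3s-1)$ carried from the first inequality, offset by $3\sum_{i=1}^s\theta(p_i)=3\sum_{i=1}^s(1-1/p_i)=3s-3\sum_{i=1}^s 1/p_i$; combining these gives a coefficient of $1-3\sum_{i=1}^s 1/p_i=\delta_3$ on $N_{k,q-1}$, exactly as in the derivation of~(\ref{goose}) but with the constant $3s$ replacing $4s$. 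This bookkeeping is routine and identical in spirit to the symmetric case, so I would present it compactly.

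The main obstacle, such as it is, lies not in any hard estimate but in getting the inclusion-exclusion accounting correct when the fourth coordinate is decoupled from the sieve: one must verify that the baseline $N_{k,q-1}$ (with the fourth slot already at $q-1$) is the right object to subtract, and that the three-slot union bound produces precisely the coefficient $(3s-1)$. Since the fourth condition is fixed throughout, it simply rides along inside every term and plays no role in the combinatorial counting; the entire argument factors through the three-coordinate sieve applied to the family of pairs already satisfying the $q-1$-freeness condition in the fourth slot. Accordingly I expect the proof to be short, referring back to the structure of Lemma~\ref{hen} and citing~(\ref{delta}) for the definition of $\delta_3$, with no genuinely new analytic input required.
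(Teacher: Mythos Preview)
Your proposal is correct and matches the paper's approach: the paper states this lemma without proof, calling it ``the (obvious) asymmetric version of Lemma~\ref{hen},'' and your argument---fixing the fourth slot at $q-1$, applying the three-coordinate sieve inequality, and then regrouping via $\theta(p_i)=1-1/p_i$ to produce the $\delta_3$ coefficient---is exactly the intended derivation. There is no analytic content beyond the bookkeeping you describe, and your computation of the coefficient $3\sum_i\theta(p_i)-(3s-1)=\delta_3$ is correct.
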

  The various differences in (\ref{turkey}) do not vanish (cf. Lemma \ref{gold}) but can be usefully bounded.

  \begin{lem} \label{brass}
  For $i=1, \ldots,s$,
$$|N(p_ik,k,k,q)-\theta(p_i)N_{k,q-1}| \leq \frac{1}{p_i}\theta^3\theta_{q-1}W^3 \cdot(q-1)\sqrt{q},$$
where $\theta=\theta(k), W=W(k)$.
A similar bound applies to the other differences in $(\ref{turkey})$.
  \end{lem}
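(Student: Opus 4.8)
The plan is to mimic the structure used to prove Lemma \ref{gold}, but now carefully tracking the terms that survive rather than showing they all vanish. Starting from the expansion analogous to (\ref{dog}), I would write
\begin{equation*}
N(p_ik,k,k,q-1)-\theta(p_i)N_{k,q-1}
=\theta(p_i)\theta^3(k)\theta_{q-1}
\sum_{\substack{d_1,d_2,d_3|k,\ d_4|q-1}}
\frac{\mu(p_id_1)\mu(d_2)\mu(d_3)\mu(d_4)}{\phi(p_id_1)\phi(d_2)\phi(d_3)\phi(d_4)}
\sum_{\substack{\chi_1\in\Gamma_{p_id_1}\\ \chi_j\in\Gamma_{d_j},\, j=2,3,4}} S,
\end{equation*}
with $S$ given by (\ref{piglet}). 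The first step is to determine which character tuples give $S\neq 0$. As in the proof of Lemma \ref{crab}, $S=0$ unless $\chi_4=\chi_1\chi_2\chi_3$; but here $\chi_1$ has order divisible by $p_i$ while $\chi_2,\chi_3$ have orders dividing $k$ (coprime to $p_i$ since $p_i\nmid k$), so $\chi_1\chi_2\chi_3$ has order divisible by $p_i$. This forces $d_4$, the order of $\chi_4$, to be divisible by $p_i$ as well — in contrast to Lemma \ref{gold}, where $d_4\mid k$ made $S$ vanish, the extra freedom $d_4\mid q-1$ now permits $p_i\mid d_4$, so the surviving terms are exactly those.

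Next I would bound the surviving contribution. On the nonzero terms $\chi_1\chi_2\chi_3\chi_4^{-1}=\chi_0$, so the estimates from \S\ref{buffet} apply verbatim: $|S|\le (q-1)\sqrt{q}$ in all cases, and I would simply use this uniform bound (the sharper distinction involving $\chi_1=\chi_4$ is not needed for the stated inequality). The count of surviving terms is where the factor $1/p_i$ and the $W^3$ enter. Summing $|\mu(p_id_1)|/\phi(p_id_1)=\mu^2(d_1)/[(p_i-1)\phi(d_1)]$ over $d_1\mid k$ contributes a factor $\tfrac{1}{p_i-1}\sum_{d_1\mid k}\mu^2(d_1)/\phi(d_1)=\tfrac{1}{(p_i-1)\theta}=\tfrac{W}{p_i-1}\cdot\tfrac{1}{W\theta}$; combined with the analogous $d_2,d_3$ sums (each giving $1/\theta$) and the count of admissible $\chi_4$ (using $\phi(d_4)\ge 1$ and that each of the three squarefree supports contributes a factor $2$, i.e.\ a total $W^3$), the prefactors assemble into $\theta^3\theta_{q-1}W^3(q-1)\sqrt{q}$, and the surviving $1/(p_i-1)$ together with $\theta(p_i)=1-1/p_i=(p_i-1)/p_i$ collapses to $1/p_i$. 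This yields exactly the claimed bound.

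The routine arithmetic of reassembling these $\theta$, $W$, and $1/p_i$ factors is the main place to be careful, but the genuine conceptual step is the order-divisibility analysis at the start: recognizing that requiring $p_i\mid d_4$ is precisely the condition isolating the nonzero terms, and that this single sieving prime contributes a density factor $1/p_i$ relative to $N_{k,q-1}$. Once that is settled, the three remaining differences in (\ref{turkey}) are handled identically by symmetry of the roles of $a$, $b$, and $ua+vb$ in the character sum, so I would state them as following \emph{mutatis mutandis}. I would also note in passing that the statement's subscript $N(p_ik,k,k,q)$ should read $N(p_ik,k,k,q-1)$ to match the notation of \S\ref{pairs}.
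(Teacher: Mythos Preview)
Your overall strategy is exactly the paper's: expand the difference as in (\ref{dog}), observe that $S=0$ unless $\chi_4=\chi_1\chi_2\chi_3$, and use the order-divisibility argument to conclude that every surviving term has $p_i\mid d_4$. That conceptual step is the heart of the proof, and you have it right.

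There is, however, a bookkeeping slip in how you extract the factor $1/(p_i-1)$. You write that it comes from the $d_1$-sum via $\phi(p_id_1)=(p_i-1)\phi(d_1)$, and then bound the $d_4$-coefficient using only $\phi(d_4)\ge 1$. But the inner sum over $\chi_1\in\Gamma_{p_id_1}$ has exactly $\phi(p_id_1)$ terms, so
\[
\sum_{d_1\mid k}\frac{|\mu(p_id_1)|}{\phi(p_id_1)}\,|\Gamma_{p_id_1}|=\sum_{d_1\mid k}\mu^2(d_1)=W,
\]
and the $1/(p_i-1)$ you isolated in the coefficient is cancelled by the character count. With $\phi(d_4)\ge 1$ you would only obtain the bound $\theta(p_i)\theta^3\theta_{q-1}W^3(q-1)\sqrt{q}$, which is too weak by a factor of $p_i-1$.

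The fix is the one the paper uses: since you have already shown $p_i\mid d_4$, write $d_4=p_id_4'$ and use $\phi(d_4)\ge\phi(p_i)=p_i-1$. That is where the genuine saving lives. Then
\[
|\Delta|\le\theta(p_i)\theta^3\theta_{q-1}\cdot\frac{1}{p_i-1}\sum_{d_1,d_2,d_3\mid k}|\mu(p_id_1)\mu(d_2)\mu(d_3)|\cdot(q-1)\sqrt{q}
=\frac{\theta(p_i)}{p_i-1}\,\theta^3\theta_{q-1}W^3(q-1)\sqrt{q},
\]
and $\theta(p_i)/(p_i-1)=1/p_i$ gives the stated inequality. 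Your remark on the typo $q\mapsto q-1$ and the \emph{mutatis mutandis} treatment of the other differences are both fine.
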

  \begin{proof}  In the expansion of $\Delta=N(p_ik,k,k,q)-\theta(p_i)N_{k,q-1}$ into character sums $S$ analogous to (\ref{dog}) or (\ref{swine}), the degree of the $\chi_1$
   must be  $p_id_1$, where $d_1|k$. But, since $S$ vanishes unless $\chi_4=\chi_1\chi_2\chi_3$, we need only include terms in which the degree of $\chi_4$ similarly is
   $p_id_4$, where $d_4|k$.  Hence
   \begin{equation} \label{pup}
\Delta=\theta(p_i)\theta^3(k)\theta_{q-1} \sum_{\substack{d_j|k,\\j=1,\ldots, 3}} \frac{\mu(p_id_1)\mu(d_2)\mu(d_3)\mu(p_id_4)}{\phi(p_id_1)\phi(d_2)\phi(d_3)\phi(p_id_4)}
   \sum_{ \substack{\ \chi_1\in \Gamma_{p_id_1}\\\chi_2\in \Gamma_{d_2}, \chi_3\in \Gamma_{d_3}\\ \chi_4=\chi_1\chi_2\chi_3}}S,
   \end{equation}
   where $S$ is given by (\ref{piglet}) and the degree of $\chi_4$ is written as $p_id_4$ with $d_4|k$. Since $|S| \leq (q-1)\sqrt{q}$ for each occurrence in (\ref{pup}), and
   $\phi(p_id_4) \geq \phi(p_i)$, it follows that
   $$|\Delta| \leq \theta^3(k)\theta_{q-1} \frac{\theta(p_i)}{{p_i-1}}W^3\cdot(q-1) \sqrt{q}$$
   and the result follows because $\theta(p_{i})/(p_i-1) =1/p_i$.
  \end{proof}

   \begin{thm}\label{emerald}
Suppose $\delta_3 >0$. Set $\theta=\theta(k),  \tau=\tau(k), W=W(k)$. Then
\begin{equation*}
 N(q,u,v) \geq   \theta^2\theta_{q-1}\cdot(q-1)\{\delta_3 \tau\  q -\theta W^3\sqrt{q}\}.
 \end{equation*}
\end{thm}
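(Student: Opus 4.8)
The plan is to combine the asymmetric sieve inequality (\ref{turkey}) with the pointwise bound of Lemma \ref{brass} and the lower bound of Lemma \ref{crab}, exactly in the spirit of how Lemmas \ref{hen} and \ref{gold} were assembled to give Theorem \ref{platinum}. The starting point is (\ref{turkey}), which already isolates the three ``difference'' terms $N(p_ik,k,k,q-1)-\theta(p_i)N_{k,q-1}$ (and its two siblings with $p_ik$ in the second or third slot) plus the clean main term $\delta_3 N_{k,q-1}$. First I would bound each of the $3s$ difference terms in absolute value using Lemma \ref{brass}, which gives each of them a bound of $\tfrac{1}{p_i}\theta^3\theta_{q-1}W^3(q-1)\sqrt{q}$; summing the three slots for a fixed $i$ contributes $\tfrac{3}{p_i}$, and summing over $i=1,\dots,s$ produces the factor $3\sum_{i=1}^s \tfrac{1}{p_i}$.

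The next step is to feed in Lemma \ref{crab} for the surviving main term $\delta_3 N_{k,q-1}$, replacing $N_{k,q-1}$ by its lower bound $\theta^2\theta_{q-1}(q-1)\{\tau q-\theta W^3\sqrt{q}\}$. At this point the inequality reads, schematically,
\begin{equation*}
N_{q-1}\ \geq\ \delta_3\,\theta^2\theta_{q-1}(q-1)\{\tau q-\theta W^3\sqrt{q}\}\ -\ 3\Big(\sum_{i=1}^s\tfrac{1}{p_i}\Big)\theta^3\theta_{q-1}W^3(q-1)\sqrt{q}.
\end{equation*}
The key algebraic observation, and the one I expect to be the only genuine obstacle, is that the two error contributions must be recombined so that the coefficient $3\sum 1/p_i$ from the difference terms merges with the $1$ inside $\delta_3$ from the main term. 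Writing $\delta_3=1-3\sum_{i=1}^s 1/p_i$ (from definition (\ref{delta})), the $-\theta W^3\sqrt{q}$ piece inside Lemma \ref{crab}'s brace carries the coefficient $\delta_3=1-3\sum 1/p_i$, while the difference terms supply an additional $-3\sum 1/p_i$ worth of the \emph{same} quantity $\theta^3\theta_{q-1}W^3(q-1)\sqrt{q}$; together these give total coefficient $-(1-3\sum 1/p_i)-3\sum 1/p_i=-1$ on the $W^3\sqrt{q}$ term. This is precisely what is needed so that the $W^3\sqrt{q}$ term in the final bound appears with coefficient $1$ (i.e.\ $-\theta W^3\sqrt{q}$ inside the brace), while $\delta_3$ survives only multiplying $\tau q$.

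Finally I would collect the common factor $\theta^2\theta_{q-1}(q-1)$ and read off
\begin{equation*}
N(q,u,v)=N_{q-1}\ \geq\ \theta^2\theta_{q-1}(q-1)\{\delta_3\tau q-\theta W^3\sqrt{q}\},
\end{equation*}
which is the assertion of the theorem. The hypothesis $\delta_3>0$ is what guarantees the main term is genuinely positive and that the bookkeeping above (where $\delta_3$ multiplies $\tau q$) yields a useful lower bound rather than a vacuous one. The whole argument is routine once the cancellation in the previous paragraph is carried out carefully; the only point demanding attention is ensuring that the coefficient of the $W^3\sqrt{q}$ error is exactly $1$ rather than $\delta_3$ or $3\sum 1/p_i$, which is the reason this asymmetric version can outperform Theorem \ref{platinum} in practice---the sieve loss $\delta_3$ attaches to the main term only, not to the (already small) square-root error.
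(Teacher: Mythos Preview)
Your proposal is correct and follows essentially the same approach as the paper: feed Lemma~\ref{brass} and Lemma~\ref{crab} into (\ref{turkey}), then use the identity $3\sum_{i=1}^s 1/p_i = 1-\delta_3$ to merge the two error contributions into a single $-\theta W^3\sqrt{q}$ term. The only small sharpening worth mentioning is that $\delta_3>0$ is needed not merely to make the bound ``useful'' but to preserve the inequality direction when you multiply the lower bound from Lemma~\ref{crab} by $\delta_3$.
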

\begin{proof} Apply the bounds of Lemmas \ref{crab} and \ref{brass} to (\ref{turkey}).  We obtain
$$ N_q \geq \delta_3\theta^2\theta_{q-1} \cdot(q-1)\{\tau\  q -\theta W^3\sqrt{q}\}
-\left\{\sum_{i=1}^s\frac{3}{p_i}\right\}\theta^3\theta_{q-1}W^3 \cdot(q-1)\sqrt{q}.$$
The result follows since $-\sum_{i=1}^s (3/p_i)=1- \delta_3$.
\end{proof}

Generally, Theorem \ref{emerald} gives a better bound than Theorem \ref{platinum} because it allows us to choose more sieving primes, i.e., a larger value of $s$.

 \section{$(u,v)$-primitive elements}\label{earl}
 For given prime power $q$ and non-zero elements $(u, v)$ in $\mathbb{F}_q$ define $M=M(q,u,v)$  as the number  of primitive elements
 in $\mathbb{F}_q$  such that $ua+va^{-1}$ is also primitive.  More generally, for divisors  $e_1, e_2$ of $q-1$, define $M_{e_1,e_2}$ to be the number
 of (non-zero) elements $a \in \mathbb{F}_q$ such that $a$ is $e_1$-free and $ua+va^{-1}$ is $e_2$-free and abbreviate $M_{e,e}$ to $M_e$.
  Then
 \begin{equation} \label{cat}
 M_e= \theta^2\sum_{d_1|e}\sum_{d_2|e}\frac{\mu(d_1)\mu(d_2)}{\phi(d_1)\phi(d_2)}\sum_{ \deg \chi_1=d_1}\sum_{\deg \chi_2=d_2}T,
\end{equation}
where $\theta= \theta(e)$ and
$$T= \sum_{a \in \mathbb{F}_q} \chi_1(a)\chi_2\left(\frac{ua^2+v}{a}\right)=\sum_{a \in \mathbb{F}_q} \chi_1\chi_2^{-1}(a)\chi_2(ua^2+v).$$

If  $\chi_1=\chi_2=\chi_0$, then $T=q-1-\varepsilon$, where $\epsilon$ is the number of zeros of $ua^2+b$ in  $\mathbb{F}_q$. (Here, $\varepsilon$
is 0 or 2 if $q$ is odd and 1 if $q$ is even.)

If $\chi_2=\chi_0$ but $\chi_1 \neq \chi_0$, then $|T|\leq \varepsilon$.

If $\chi_1=\chi_2 \neq \chi_0$, then $|T| \leq \sqrt{q}$.

If $\chi_1 \neq \chi_2$ and $\chi_2 \neq \chi_0$, then $|T| \leq 2\sqrt{q}$.

Hence, from (\ref{cat}),

\begin{equation} \label{kitty}
|M_e-\theta^2\cdot(q-1-\varepsilon)|\leq \theta^2\{2\sqrt{q}A-\sqrt{q}B-(2\sqrt{q}-\varepsilon)C+\sqrt{q}-\varepsilon\}.
\end{equation}
In (\ref{kitty}),
$$A=\sum_{d_1|e}\sum_{d_2|e}\frac{|\mu(d_1)\mu(d_2)|}{\phi(d_1)\phi(d_2)}\sum_{ \deg \chi_1=d_1}\sum_{\deg \chi_2=d_2}1=W^2,$$
where $W=W(e)$. Further,
$$B=\sum_{d|e}\frac{\mu^2(d)}{\phi^2(d)}\sum_{ \deg \chi=d}1= \sum_{d|e}\frac{\mu^2(d)}{\phi(d)}=1/\theta.  $$
Finally,
$$C=\sum_{d|e}\frac{\mu(d)}{\phi(d)}\sum_{ \deg \chi=d}1=W.$$
It follows that
\begin{equation}\label{lion}
|M_e-\theta^2\cdot(q-1-\varepsilon)|\leq \theta^2\left\{2\sqrt{q}\left[W^2-W-\frac{1}{2}\left(\frac{1}{\theta}-1\right)\right]+\varepsilon(W-1)\right\}.
\end{equation}
We may take $e=q-1$ in (\ref{lion}) to deduce an asymptotic expression for $M(q,u,v)$ (see also \cite[Thm.\ 1.3]{LiLiPu}), thereby proving the following theorem.

\begin{thm} \label{lioncub}
Let $q$ be a prime power and set $\theta=\theta(q-1)$ and $W=W(q-1)$. Then
$$\left|M(q,u,v)-\theta^2\cdot(q-1-\varepsilon)\right|\leq \theta^2\left\{2\sqrt{q}\left[W^2-W-\frac{1}{2}\left(\frac{1}{\theta}-1\right)\right]+\varepsilon(W-1)\right\}.$$
\end{thm}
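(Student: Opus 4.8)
The plan is to obtain Theorem \ref{lioncub} as the specialisation $e = q-1$ of the general inequality (\ref{lion}). Since a non-zero element of $\mathbb{F}_q$ is primitive precisely when it is $(q-1)$-free, and both $a$ and $ua+va^{-1}$ are required to be primitive, the quantity $M_{q-1}$ introduced ahead of (\ref{cat}) coincides with $M(q,u,v)$; moreover $\theta(q-1)=\theta$ and $W(q-1)=W$ in the notation of the statement. Substituting $e=q-1$ into (\ref{lion}) therefore reproduces the asserted bound verbatim, so once (\ref{lion}) is in hand no further argument is needed.

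The substantive content lies in the derivation of (\ref{lion}), which I would organise exactly along the lines already laid out. First, I would expand the characteristic function $\lambda_{q-1}$ of the primitive elements as a weighted sum over multiplicative characters, so that $M_e$ becomes the double sum (\ref{cat}) with inner character sum $T=\sum_{a}\chi_1\chi_2^{-1}(a)\,\chi_2(ua^2+v)$. Next I would split according to whether $\chi_1,\chi_2$ are principal: the diagonal term $\chi_1=\chi_2=\chi_0$ supplies the main contribution $q-1-\varepsilon$, where $\varepsilon$ is the number of zeros of $ua^2+v$, while the three remaining cases are bounded by $\varepsilon$, by $\sqrt{q}$, and by $2\sqrt{q}$ respectively. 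Finally, I would repackage the multiplicities as the three combinatorial sums $A=W^2$, $B=1/\theta$, $C=W$, assemble the intermediate estimate (\ref{kitty}), and simplify to (\ref{lion}).

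The main obstacle is the estimation of the genuinely mixed character sum in the case $\chi_1\neq\chi_2$ with $\chi_2\neq\chi_0$, where the argument $ua^2+v$ is quadratic. Here one must invoke Weil's bound for sums of the shape $\sum_{a}\psi(a)\,\chi(f(a))$, and the delicate point is verifying the non-degeneracy hypothesis, namely that $\chi_1\chi_2^{-1}(a)\,\chi_2(ua^2+v)$ is not induced from a character of a proper quotient, equivalently that the relevant product of powers of $a$ and of $ua^2+v$ is not a constant times a perfect power; the degrees $1$ and $2$ then combine to yield the clean bound $|T|\le 2\sqrt{q}$. The companion case $\chi_1=\chi_2\neq\chi_0$ collapses to $\sum_{a\neq 0}\chi_2(ua^2+v)$ and gives $|T|\le\sqrt{q}$ by the same circle of ideas applied to a single quadratic. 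The only other place demanding care is the small quantity $\varepsilon$: the quadratic $ua^2+v$ degenerates to a perfect square in even characteristic (so it has a single root, whence $\varepsilon=1$), whereas in odd characteristic it has $0$ or $2$ roots, and one must track $\varepsilon$ consistently through the two $\chi_2=\chi_0$ cases when assembling (\ref{kitty}).
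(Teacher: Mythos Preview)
Your proposal is correct and follows precisely the paper's own route: the paper derives (\ref{lion}) via the character expansion (\ref{cat}), the four-way case split on $(\chi_1,\chi_2)$ with the bounds $q-1-\varepsilon$, $\varepsilon$, $\sqrt{q}$, $2\sqrt{q}$, and the evaluation $A=W^2$, $B=1/\theta$, $C=W$, and then obtains Theorem~\ref{lioncub} by the single specialisation $e=q-1$. Your added commentary on the Weil non-degeneracy check and the role of $\varepsilon$ in even versus odd characteristic fills in details the paper leaves implicit, but the argument itself is the same.
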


We now introduce the sieve (with the usual notation).  First, here is the analogue of Lemma \ref{hen}.

\begin{lem}\label{cock} Define $\delta_2$ by $(\ref{delta})$.  Then
$$ M\geq \sum_{i=1}^s\{[M_{p_ik,k}-\theta(p_i)M_{k,k}]+[M_{k,p_ik}-\theta(p_i)M_{k,k}]\} +\delta_2 M_{k,k}.$$
 \end{lem}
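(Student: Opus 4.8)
The plan is to prove Lemma \ref{cock} by the same inclusion–exclusion sieving argument used for Lemma \ref{hen}, now adapted to the single-element setting where only two freeness conditions (on $a$ and on $ua+va^{-1}$) are being tracked. The key observation is that $M = M_{q-1,q-1}$ counts elements $a$ that are simultaneously $(q-1)$-free in both coordinates, and that being $p_ik$-free in a coordinate is a \emph{stronger} condition than being $k$-free there, for each sieving prime $p_i$. Writing $\Rad(q-1)=kp_1\cdots p_s$, an element is primitive (i.e. $(q-1)$-free) in a given coordinate precisely when it is $k$-free \emph{and} $p_i$-free for every $i=1,\ldots,s$ in that coordinate.

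First I would set up the sieve inequality. For each coordinate, the set of elements that are $k$-free but fail to be $p_ik$-free for some $i$ is exactly the set that is $k$-free but not primitive. By a standard Bonferroni-type inequality (inclusion–exclusion truncated after the first order), the count of elements that are $p_ik$-free in coordinate one for \emph{all} $i$ and $p_ik$-free in coordinate two for \emph{all} $i$ is bounded below by summing the individual ``upgrade'' counts $M_{p_ik,k}$ and $M_{k,p_ik}$ and subtracting enough copies of the baseline $M_{k,k}$. Concretely, the elementary identity $\lambda_{p_ik}(a)=\theta(p_i)\lambda_k(a)+(\text{terms from characters of order divisible by }p_i)$ together with the subadditivity of the complementary events gives $M \geq \sum_{i=1}^s\{M_{p_ik,k}+M_{k,p_ik}\} - (2s-1)M_{k,k}$, the exact analogue of the first display in Lemma \ref{hen} but with $2s$ in place of $4s$ because there are two freeness slots rather than four.

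Next I would rewrite this by grouping. Adding and subtracting $\theta(p_i)M_{k,k}$ inside each bracket converts the raw sum into a sum of differences $[M_{p_ik,k}-\theta(p_i)M_{k,k}]+[M_{k,p_ik}-\theta(p_i)M_{k,k}]$, plus a leftover multiple of $M_{k,k}$. The leftover coefficient is
$$
-(2s-1)+2\sum_{i=1}^s\theta(p_i)=1-2\sum_{i=1}^s\bigl(1-\theta(p_i)\bigr)=1-2\sum_{i=1}^s\frac{1}{p_i}=\delta_2,
$$
using $\theta(p_i)=1-1/p_i$ and the definition (\ref{delta}) of $\delta_2$. This yields exactly the stated inequality. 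The role of adding $\theta(p_i)M_{k,k}$ rather than $M_{k,k}$ is that the differences $M_{p_ik,k}-\theta(p_i)M_{k,k}$ are the quantities one expects to be small (indeed, in the pairs case Lemma \ref{gold} showed the analogous differences vanish), so that the ``main term'' is carried by the single clean $\delta_2 M_{k,k}$ contribution.

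I expect the main obstacle to be justifying the first, raw sieve inequality rigorously rather than merely by analogy. The subtlety is that the events ``fails $p_i$-freeness in coordinate one'' and ``fails $p_j$-freeness in coordinate two'' live in different coordinates, so one must verify that a single Bonferroni bound correctly handles all $2s$ failure events across both coordinates simultaneously and produces the coefficient $(2s-1)$, not something larger. The cleanest route is to invoke the general sieving inequality already established in the paper's framework (the same principle underlying Lemma \ref{hen}) applied to the two-slot count $M_{e_1,e_2}$, noting that each slot contributes $s$ failure events; beyond that verification, the remaining manipulation is the routine arithmetic identity for the coefficient of $M_{k,k}$ carried out above.
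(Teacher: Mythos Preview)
Your proposal is correct and follows exactly the approach the paper intends: the paper states Lemma~\ref{cock} without proof, simply calling it ``the analogue of Lemma~\ref{hen}'', and your argument supplies precisely that analogue --- the Bonferroni/union-bound inequality $M\geq\sum_{i=1}^s\{M_{p_ik,k}+M_{k,p_ik}\}-(2s-1)M_{k,k}$ followed by the regrouping that produces $\delta_2$. The arithmetic you give for the coefficient of $M_{k,k}$ is correct.
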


 \begin{lem} \label{puma} Let $\theta=\theta(k), W=W(k)$.  Then
 \begin{enumerate} [label=\emph{(\roman*)}]
 \item $\displaystyle{|M_{p_ik,k}-\theta(p_i)M_{k,k}| \leq \theta^2\left(1-\frac{1}{p_i}\right)\cdot\{2\sqrt{q}(W^2-W)+\varepsilon W\}}.$
 \item $\displaystyle{ |M_{k,p_ik}-\theta(p_i)M_{k,k}|  \leq 2\left(1-\frac{1}{p_i}\right)\theta^2W^2\cdot \sqrt{q}} .$
 \end{enumerate}
 \end{lem}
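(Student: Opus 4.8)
The plan is to mimic the derivation of Lemma \ref{brass}, expanding each of $M_{p_ik,k}$ and $M_{k,p_ik}$ into character sums via the two-modulus generalisation of (\ref{cat}) and isolating the terms in which one of the two characters has degree divisible by the sieving prime $p_i$. First I would record that, since $p_i\nmid k$, we have $\theta(p_ik)=\theta(p_i)\theta$ and the squarefree divisors of $p_ik$ split as $\{d:d\mid k\}\cup\{p_id:d\mid k\}$. Writing $M_{p_ik,k}$ in the form of (\ref{cat}) with moduli $p_ik$ and $k$, the divisors $d_1\mid k$ reconstitute exactly $\theta(p_i)M_{k,k}$, so the difference is the residual subsum over $d_1=p_id_1'$ with $d_1'\mid k$. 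Using $\mu(p_id_1')=-\mu(d_1')$, $\phi(p_id_1')=(p_i-1)\phi(d_1')$ and $\theta(p_i)/(p_i-1)=1/p_i$, this difference becomes
$$
-\frac{\theta^2}{p_i}\sum_{d_1\mid k}\sum_{d_2\mid k}\frac{\mu(d_1)\mu(d_2)}{\phi(d_1)\phi(d_2)}\sum_{\deg\chi_1=p_id_1}\sum_{\deg\chi_2=d_2}T,
$$
where $T$ is as in (\ref{cat}).

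The decisive observation --- analogous to the vanishing in Lemma \ref{gold} --- is that here $\deg\chi_1$ is always divisible by $p_i$, whereas $\deg\chi_2$ divides $k$ and is therefore coprime to $p_i$. Consequently two of the four cases in the analysis preceding (\ref{kitty}) cannot arise: $\chi_1$ can never be $\chi_0$, which excludes the main term $T=q-1-\varepsilon$; and $\chi_1\neq\chi_2$ always, since their degrees differ, which excludes the case $|T|\le\sqrt q$. Only two possibilities remain: if $d_2=1$ (so $\chi_2=\chi_0$) then $|T|\le\varepsilon$, and otherwise $|T|\le 2\sqrt q$. I would then bound the $d_2=1$ contribution by $\varepsilon(p_i-1)W$ and the $d_2\neq1$ contribution by $2\sqrt q\,(p_i-1)W(W-1)$, using $\sum_{d\mid k}|\mu(d)|=W$ and $\phi(p_id_1)=(p_i-1)\phi(d_1)$ to cancel the denominators. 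Multiplying by $\theta^2/p_i$ and collecting the factor $(p_i-1)/p_i=1-1/p_i$ yields part (i).

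For part (ii) the identical factorisation presents $M_{k,p_ik}-\theta(p_i)M_{k,k}$ as the same kind of subsum, but now with $\deg\chi_2=p_id_2$ divisible by $p_i$ and $\deg\chi_1\mid k$ coprime to it. The gain is that now it is $\chi_2$ that can never be principal, so the case $\chi_2=\chi_0$ (carrying the $\varepsilon$-term) is also excluded; since the degrees of $\chi_1$ and $\chi_2$ still differ we again have $\chi_1\neq\chi_2$, leaving $|T|\le 2\sqrt q$ as the only surviving bound. Summing $2\sqrt q$ against $\bigl(\sum_{d_1\mid k}|\mu(d_1)|\bigr)\bigl(\sum_{d_2\mid k}|\mu(d_2)|\bigr)(p_i-1)=(p_i-1)W^2$ and multiplying by $\theta^2/p_i$ produces the cleaner bound $2\theta^2(1-1/p_i)W^2\sqrt q$, with no $\varepsilon$ term.

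The main obstacle is really just the bookkeeping of which of the four cases for $T$ survive; once one sees that forcing a character's degree to be divisible by $p_i$ simultaneously kills both the principal-character main term and the diagonal $\chi_1=\chi_2$ term, the remaining estimate is a routine count of characters weighted by $1/\phi$. The one point that requires care is tracking the asymmetry between (i) and (ii): in (i) the character forced to be non-principal is $\chi_1$, so the $\chi_2=\chi_0$ branch persists and contributes the $\varepsilon W$ term, whereas in (ii) it is $\chi_2$ that is forced, so that branch disappears entirely.
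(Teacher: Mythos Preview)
Your proof is correct and follows essentially the same approach as the paper's: expand the mixed-modulus counts into character sums, isolate the terms with one character of order divisible by $p_i$, observe which of the four cases for $T$ survive, and sum the resulting bounds. Your write-up is in fact more careful than the paper's (which omits the $\theta(p_i)\theta^2$ prefactor in its displayed formula and simply says ``follows from the bounds on $T$''), but the underlying argument is the same.
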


\begin{proof}
$$|M_{p_ik,k}-\theta(p_i)M_{k,k}| =\sum_{d_1|k}\sum_{d_2|k}\frac{|\mu(p_id_1)\mu(d_2)|}{\phi(p_i)\phi(d_1)\phi(d_2)}\sum_{ \deg \chi_1=p_id_1}\sum_{\deg \chi_2=d_2}T,$$
and (i)  follows from the bounds on $T$.

Similarly, (ii) holds, since the only character sums involved have $\chi_1 \neq \chi_2$ and $\chi_2 \neq \chi_0$.
\end{proof}
\begin{thm} \label{tiger} Assume $q>3$ is a prime power.
With $\theta=\theta(k), W=W(k)$ (as in Lemma $\ref{puma}$), suppose $\delta_2 >0$.
Then
\begin{equation}\label{leopard}
M > \theta^2\sqrt{q} \left\{\sqrt{q} -2\left(\frac{2s-1}{\delta_2}+2\right)\left[W^2-\frac{W}{2}\left(1-\frac{1}{\sqrt{q}}\right)\right]\right\}.
\end{equation}
Hence, if
\begin{equation} \label{cheetah}
\sqrt{q} >2\left(\frac{2s-1}{\delta_2}+2\right)\left[W^2-\frac{W}{2}\left(1-\frac{1}{\sqrt{q}}\right)\right],
\end{equation}
then $q \in \mathcal{T}$.
\end{thm}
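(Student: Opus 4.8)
The plan is to prove Theorem \ref{tiger} by feeding the bounds of Lemma \ref{puma} into the sieve inequality of Lemma \ref{cock}, and then bounding the main term $M_{k,k}$ from below using the asymptotic estimate \eqref{lion} applied with $e=k$. First I would substitute the two parts of Lemma \ref{puma} into Lemma \ref{cock}. There are $s$ differences of the first type (each bounded by $\theta^2(1-1/p_i)\{2\sqrt{q}(W^2-W)+\varepsilon W\}$) and $s$ differences of the second type (each bounded by $2(1-1/p_i)\theta^2 W^2\sqrt{q}$). Since each difference enters with a minus sign through the triangle inequality, I would collect these $2s$ error terms. The factors $(1-1/p_i)$ are each at most $1$, so summing gives a coefficient controlled by $s$, and combining the two $2\sqrt{q}W^2$ contributions with the $-2\sqrt{q}W$ and $\varepsilon W$ pieces should produce an aggregate error of the shape $(2s)\theta^2\{2\sqrt{q}W^2 - \sqrt{q}W + \tfrac{1}{2}\varepsilon W\}$ or similar, which I would want to massage into the bracket $[W^2 - \tfrac{W}{2}(1-1/\sqrt{q})]$ appearing in \eqref{leopard}.

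Next I would bound $\delta_2 M_{k,k}$ from below. From \eqref{lion} with $e=k$ we have $M_{k,k} \geq \theta^2(q-1-\varepsilon) - \theta^2\{2\sqrt{q}[W^2 - W - \tfrac{1}{2}(1/\theta - 1)] + \varepsilon(W-1)\}$. Multiplying by $\delta_2$ and adding the collected error terms from the previous step, I would factor out $\theta^2\sqrt{q}$ throughout. The constant $2s-1$ in \eqref{leopard} strongly suggests that the main-term error from $M_{k,k}$ (one copy, weighted by $\delta_2$, hence effectively one unit once divided through by $\delta_2$) combines additively with the $2s$ sieve error units to give a total coefficient of $2s-1$ after the division by $\delta_2$; the extra additive $+2$ presumably absorbs the $\varepsilon$-dependent terms and the $(1/\theta-1)$ correction uniformly over the allowed range (recall $q>3$ forces $\varepsilon \in \{0,1,2\}$ and $\theta \leq 1$), so that a single clean bracket $[W^2 - \tfrac{W}{2}(1-1/\sqrt{q})]$ dominates all the pieces simultaneously.

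The main obstacle I anticipate is the bookkeeping needed to merge three structurally different error brackets---the $2s$ copies of the Lemma \ref{puma} bounds, the $\delta_2$-weighted error from \eqref{lion}, and the stray $\varepsilon$-terms---into the single uniform bracket while legitimately replacing the various $(1-1/p_i)$, $(1/\theta-1)$, and $\varepsilon$ quantities by clean bounds valid for all $q>3$. In particular, justifying the crossover from $-W - \tfrac{1}{2}(1/\theta-1)$ (from \eqref{lion}) and $-\tfrac{1}{2}W$ (the target) to the common term $-\tfrac{W}{2}(1-1/\sqrt{q})$ requires checking that the discarded positive quantities and the $\varepsilon$-contributions never push the estimate the wrong way; I would handle this by bounding $\varepsilon \leq 2$, using $1/\theta \geq 1$, and noting $1 - 1/p_i < 1$, then verifying the resulting inequality holds term-by-term. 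Once \eqref{leopard} is established, the final implication is immediate: the right-hand side of \eqref{leopard} is positive precisely when \eqref{cheetah} holds, and $M > 0$ means there exists a $(u,v)$-primitive element, i.e.\ $q \in \mathcal{T}$.
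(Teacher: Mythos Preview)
Your overall architecture is right: feed the bounds of Lemma~\ref{puma} into Lemma~\ref{cock}, then bound $\delta_2 M_{k,k}$ below via \eqref{lion} with $e=k$. That is exactly what the paper does. The gap is in the bookkeeping that produces the constant $2s-1$.

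You propose to bound each factor $1-1/p_i$ above by $1$, giving $\sum_{i=1}^s(1-1/p_i)\le s$, and then hope that adding the single main-term error somehow turns a coefficient $2s$ into $2s-1$ after division by $\delta_2$. That arithmetic cannot work: a crude bound of $s$ on this sum leads at best to a coefficient $2s/\delta_2$ in place of $(2s-1)/\delta_2$, and you would prove only a weaker inequality than \eqref{leopard}. The paper does \emph{not} estimate $\sum(1-1/p_i)$; it evaluates it exactly. From $\delta_2=1-2\sum_{i=1}^s 1/p_i$ one has
\[
\sum_{i=1}^s\Bigl(1-\frac{1}{p_i}\Bigr)=s-\frac{1-\delta_2}{2}=\frac{2s-1+\delta_2}{2}.
\]
This identity is the source of both the $2s-1$ and the first of the two units in the ``$+2$'': the sieve errors contribute a factor $(2s-1+\delta_2)/\delta_2=(2s-1)/\delta_2+1$, and the error from \eqref{lion} for $M_{k,k}$, which carries the weight $\delta_2$, supplies the second unit after dividing through by $\delta_2$. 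The $\varepsilon$-dependent scraps are then disposed of in one line using $W\sqrt{q}>\varepsilon W+1$ for $q>3$, not by hiding them inside the ``$+2$'' as you suggest.
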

\begin{proof}
Apply Lemma \ref{puma} to the bound of Lemma \ref{cock}.  Observe that
$$\sum_{i=1}^s \left(1- \frac{1}{p_i}\right)=\frac{1}{2}(2s-1+\delta_2).$$
Hence
$$M \geq \delta_2\theta^2\left\{2 \sqrt{q}\left(\frac{2s-1}{\delta_2}+1\right)\left[W^2-\frac{W}{2}\left(1-\frac{\varepsilon}{2\sqrt{q}}\right)\right]+U\right\},$$
where
$$U=(q-1-\varepsilon)-\left\{2\sqrt{q}\left[W^2-W-\frac{1}{2}\left(\frac{1}{\theta}-1\right)\right]+\varepsilon(W-1)\right\} $$
The inequality (\ref{leopard}) follows, since $W\sqrt{q}> \varepsilon W+1 \ (\varepsilon \leq 2)$, certainly for $q>3$.

The criterion (\ref{cheetah}) is then immediate.
\end{proof}

\section{Existence proofs}\label{existence}
In this section we begin to prove Theorem \ref{ox} by demonstrating, using the theorems we have established, that all but finitely many $q$ are members of $\mathcal{T}$and $\mathcal{S}$.   Specifically, we prove that all but  at most 3031 prime powers $q$ are in $\mathcal{T}$ and all but at most 532 values of $q$ are not in $\mathcal{S}$.   Moreover, the possible exceptions could be listed explicitly (although we do not do so).

First, we can show that $q \in \mathcal{T}$
by establishing that the (stronger) sufficient inequality  $q>4W^4(q-1)$ (derived from Theorem \ref{lioncub}) holds whenever $\omega(q-1) \geq 17$,  and the inequality
\[q>  4\left(\frac{{2s-1}}{\delta_2}+2\right)^2W^4(k) \]
(derived from Theorem \ref{tiger}) holds with $s=5$  whenever  $9\leq \omega(q-1)\leq 16$.  We therefore only have to consider those $q$ with $\omega(q-1) \leq 8$.

Now, for each value of $1\leq \omega(q-1) \leq 8$ we find a value of $s\in[1, \omega(q-1) -1]$ such that the right-side of (\ref{cheetah}) is minimised --- call this $q_{\textrm{max}}$. Now $q-1 \geq p_{1} p_{2} \ldots p_{\omega(q-1))}$:= $q_{\textrm{min}}$. We therefore need only check $q\in (q_{\textrm{min}}, q_{\textrm{max}})$.

For example, when $\omega(q-1) = 8$ we choose $s=5$, whence $\delta_{2} \geq 1 - 2(1/7 + 1/11 + 1/13 + 1/17 + 1/19)> 0.1557$ and so $q_{\textrm{max}} < 5.15\cdot 10^{7}$. We also have that $q_{\textrm{min}} = 9,699,691$. We enumerate all prime powers in $(q_{\textrm{min}}, q_{\textrm{max}})$ and select those with $\omega(q-1) = 8$. There are 49 such values, the largest of which is $q= 51,269,791$. For each of these 49 values we now compute the exact value of $\delta_{2}$ for each $s$. For example, for $s=5$ and $q= 51,269,791$ we have $\delta_{2} > 0.387$ --- a considerable improvement. We now look to see whether (\ref{leopard}) holds for these values of $q$. We find that (\ref{leopard}) is true for all but 9 values, the largest of which is $31, 651, 621$. 

We continue in this way, the only deviation from the above example being that for $\omega(q-1) = 1$ we use Theorem \ref{lioncub}. Our results are summarised in Table~\ref{mountain}, which lists, for each value of $\omega(q-1)$, the number of  $q$ for which Theorem \ref{tiger} \emph{fails}  to show that $q \in \mathcal{T}$. Table \ref{mountain} also gives the least and greatest prime and prime power in each category.

 \begin{table}[ht]
 \centering
 \caption{\it{Numbers of primes and prime powers $q$ not shown to be in $\mathcal{T}$.}}
 \label{mountain}
 \medskip
 \begin{tabular}{c c r r |c r r}
   \hline

   $\omega(q-1)$& primes  & least & greatest & prime powers & least & greatest \\
   \hline
   8 & 9 & 13123111 & 31651621 & 0 & - & -\\
   7 & 171 & 870871 & 10840831 & 2 & 2042041& 7447441\\
   6 & 698& 43891 & 2972971& 11& 175561& 1692601 \\
   5 & 951& 2311 & 813121 & 18& 17161 & 776161 \\
   4 & 813& 211 & 102061 & 30 & 841& 63001 \\
   3 & 257& 31 & 9721 & 16& 343 & 2401 \\
   2& 40& 7 & 769 & 9& 16 & 289\\
   1& 3& 3 & 17 &  3 &  4 & 9\\
   \hline

   \hline
 \end{tabular}

 \end{table}
In total, in Table~\ref{mountain}, there are 2942 prime values of $q$ which may not be in $\mathcal{T}$. The prime $2$ is excluded (but clearly $2 \not \in \mathcal{T}$).
 The total of 89 (non-prime) prime powers  comprise 69 prime squares and 20 higher powers.  Unsurprisingly, the latter are powers of small primes as follows: $2^3,2^4,2^5,2^6,$ $2^8,2^{10},$ $2^{12}$, $3^3,3^4,$ $3^5,3^6,5^3,5^4,5^6,7^3,$ $7^4, 11^3, 11^4, 13^3, 13^4, 31^3$.
 Excluding $q=2$, the above leaves a total of 3031 possible prime powers $q$ as candidates for non-membership. Let $\mathcal{C_T}$ be the set of these 3031 candidates. 
We reduce this number substantially in Theorem \ref{final_T}, and, in \S\ref{Hillary} prove Theorem \ref{ox}.

We can pass our list $\mathcal{C_T}$ of 3031 possible exceptions through Theorems \ref{bull}, \ref{platinum} and \ref{emerald}. Note that these test for membership of $\mathcal{S}$. We find that there are only 532 possible prime powers not in $\mathcal{S}$. 



 At this point it is pertinent to add some remarks on the parity of $q$. When $q$ is even, a $(1,1)$-primitive element is the same as a $(1,-1)$-primitive element.  It was proved in \cite{Co14}
  that all fields $\mathbb{F}_{2^n},  n\geq 3$, contain a $(1,1)$-primitive element.  This proof was theoretical, except for the
  values $n=6$ and $12$ when an explicit $(1,1)$-primitive element was given.  In fact, in the preparation
  of \cite{Co14}, the first author overlooked previous work of his \cite{Co87} in which a theoretical proof was given
  (even in these two difficult cases).
  An explanation for the oversight is
  that \cite{Co87} was framed in the notions  of  Garbe \cite{Ga} relating to the order and level of an irreducible polynomial,
  rather than an element of the field.
 In fact, for $q$ even, existence was established in \cite{Co87}, Theorem 5.2.
  In any event, we can assume from now on that $q$   is odd.

\section{Computational results}
\label{comp_res}

In this section we give algorithms and timings for our computations verifying that many
$q$ are in 
$\mathcal{T}$.

Let $w=u^{-1}v$ and $r=a+wa^{-1}$. Then
\[
  ua+va^{-1} = u(a+wa^{-1}) = ur,
\]
and if $u$ and $v$ are non-zero elements of~$\Fq$ then $w$ will also be a non-zero element of~$\Fq$. Thus, to verify
if $q\in\mathcal{T}$ it is sufficient to verify that for all non-zero elements $u$ and $w$ of~$\Fq$,
 there exists a primitive element $a$ of~$\Fq$ such that $ur$ is also a primitive element of~$\Fq$. The
transformation of the original  problem into one with a multiplicative structure allows
discrete logarithms to be used. As primitive elements are easy to characterise using discrete
logarithms, this will give rise to important computational savings.

Let $\gamma$ be a primitive element of~$\Fq$ and let $\log v$ denote the base $\gamma$ discrete logarithm of the
non-zero element $v$ of $\Fq$. Let $p_1,p_2,\ldots,p_{\omega(q-1)}$ be the distinct prime divisors of~$q-1$, and let $R$ be
their product (the radical of $q-1$). If $u$ and $r$ are both non-zero then $ur$ is a primitive element of~$\Fq$
if and only if
\[
  \gcd\bigl( \log u + \log r , q-1 \bigr) = 1,
\]
i.e., if and only if
\[
  \log u \not\equiv -\log r \bmod p_i,\qquad i=1,\ldots,\omega(q-1).
\]
For a given~$w$, it follows that each primitive element $a$ for which $r$ is non-zero takes care of
$\prod_{i=1}^{\omega(q-1)} (p_i-1)$ residue classes of $\log u\bmod R$.
The first result of this setup is Algorithm~\ref{algo_Te}.

\begin{algorithm}[h]
\DontPrintSemicolon
\AlgoDontDisplayBlockMarkers
\SetAlgoNoEnd
\SetAlgoNoLine
\SetKwProg{Proc}{Procedure}{}{}%
\SetKwFunction{checkbetagamma}{check\_beta\_inv\_gamma}%
\SetKwFunction{checkq}{check\_q}%
\SetKwFunction{rad}{rad}%
\SetKw{KwNext}{next}%
\SetKw{KwTo}{in}%
\Proc{\checkq{$q$}}{
    \textit{Construct $\Fq$ and primitive element $\gamma$}\;
    $R \leftarrow$ \rad($q - 1$)\;
    \For{$0 \leq j < q-1$}{
        $v \leftarrow \gamma^j$\;
        \For{$0 \leq k \leq R$}{
            $u \leftarrow \gamma^k$\;
            \For{$l$ in stored\_logs}{
                \If{GCD(k+l, R) = 1}{
                    \KwNext $k$\;
                }
            }
            \For{$1 \leq m < q-1$}{
                \If{GCD(m, R) = 1}{
                    $a \leftarrow \gamma^m$\;
                    $l \leftarrow \log_{\gamma}(a + v a^{-1})$\;
                    Store $l$ in stored\_logs\;
                    \If{GCD(k+l, R) = 1}{
                        \KwNext $k$\;
                    }
                }
            }
            \If{$m = q-1$}{
                FAIL\;
            }
        }
    }
}
\caption{Check whether $q \in \mathcal{T}$\label{algo_Te}}
\end{algorithm}

To maximise efficiency in Algorithm~\ref{algo_Te} we store the $\log$s we
have computed as well as the elements which have already been determined to
be primitive so we can first check through our list of stored primitive
elements $a$ and only generate more primitive elements as needed.

We can write $\mathcal{E_T}$ (defined in (\ref{fridge}))
as
$$\{2, 3, 4, 5, 9\} \cup \{ 7, 11, 13, 19, 25, 29, 37, 41, 49, 81, 97\} \cup
\{31, 43, 61, 121, 169\}$$ according to $\omega(q-1)$.
It has been checked by running Algorithm~\ref{algo_Te}
using Magma~\cite{magma223} that
$\mathcal{E_T} \cap \mathcal{T} = \emptyset$ and
that $q \in \mathcal{T}$ for all $q \in \mathcal{C_T} \setminus \mathcal{E_T}, \omega(q-1) < 7$.
In Table~\ref{timings_T} we provide total timings for these checks for all
$q \in \mathcal{C_T}$, grouped by $\omega(q-1)$, on a
2.6GHz Intel\regsym{} Xeon\regsym{} E5-2670 or similar machine. 
Note that checking that the 140 $q$ with $\omega(q-1)=6$ of the 532 $q$ which are not known to be in $\mathcal{S}$ are in $\mathcal{T} \subset \mathcal{S}$ took only 178 days
and each such $q$ could be checked in less than 3.4 days.

\begin{table}[h]
\begin{centering}
\begin{tabular}{|c|c|c|c|c|c|c|c|c|c|}
\hline
$\omega(q-1)$ & 1 & 2 & 3 & 4 & 5 & 6\\
Number of $q$ checked & 6 & 49 & 273 & 843 & 969 & 709 \\ 
Time & 1.87 & 2.7s & 591s & 1.9 days & 98.9 days & 20.003 years \\ 
\hline
\end{tabular}\caption{Total timings for checking whether $q \in \mathcal{T}$ for $q \in \mathcal{C_T}$.}\label{timings_T}
\end{centering}
\end{table}

We found it efficient to store primitive elements $a$ and check
$u (a + v a^{-1})$ for primitivity with the stored $a$ first before
generating more primitive elements. Note that we do not need to check
all $(u, v)$ pairs since $u a + v a^{-1} = v a^{-1} + u (a^{-1})^{-1}$
so if there is a $(u, v)$-primitive element there is also a $(v, u)$-primitive
element. However, this improvement is made redundant by factoring out $u$ 
and iterating through only $R$ many.

We noticed that if $a$ is a $(u, v)$-primitive element then it is also a
$(u', v')$-primitive element when $(v'-v) = -a^2(u - u')$.
Unfortunately, these observations  did not improve the
efficiency of our algorithms.


\begin{algorithm}[h]
\DontPrintSemicolon
\AlgoDontDisplayBlockMarkers
\SetAlgoNoEnd
\SetAlgoNoLine
\SetKwProg{Proc}{Procedure}{}{}%
\SetKwFunction{checkbetagamma}{check\_beta\_inv\_gamma}%
\SetKwFunction{checkq}{check\_q}%
\SetKwFunction{rad}{rad}%
\SetKw{KwNext}{next}%
\SetKw{KwTo}{in}%
\Proc{\checkq{$q$}}{
    \textit{Construct $\Fq$ and primitive element $\gamma$}\;
    \For{$0 \leq k < q-1$}{
        $u \leftarrow \gamma^k$\;
	\For{$0 \leq l < q-1$}{
	    $v \leftarrow \gamma^l$\;
	    \For{$0 \leq m < q-1$}{
		\If{GCD(m, q-1) = 1}{
		    $a \leftarrow \gamma^m$\;
		    \For{$0 \leq n < q-1$}{
			\If{GCD(n, q-1) = 1}{
			    $b \leftarrow \gamma^n$\;
			    \If{$u a + v b$ and $v a^{-1} + u b^{-1}$ are primitive}{
				\KwNext $l$\;
			    }
			}
		    }
		}
	    }
	    \If{$m = q-1$}{
		FAIL\;
	    }
        }
    }
}
\caption{Check whether $q \in \mathcal{S}$\label{algo_S}}
\end{algorithm}

We also checked using Algorithm~\ref{algo_S}
whether $q \in \mathcal{S}$ for $q \in \mathcal{E_T}$ and found
that only $2, 3, 4, 5, 7, 13 \notin \mathcal{S}$.
The computations for these
checks took about 1 second using Magma on a
3.4GHz Intel\regsym{} Core\trademark{} i7-3770 or similar machine. 

Finally, we deduce Corollary~\ref{calf} by checking which $(u, v)$ caused failures in
Algorithms~\ref{algo_Te} and~\ref{algo_S}.

We summarise our results from this section in the following theorem.
\begin{thm}
\label{final_T}
All
prime powers $q$ with $\omega(q-1) < 7$ and $q\notin \mathcal{E_T} $ are in $\mathcal{T}$ 
and so also in $\mathcal{S}$. There are at most $182$ values of $q\not\in \mathcal{E_T}$ not in $\mathcal{T}$, the largest of which is $31,651,621$. 
\end{thm}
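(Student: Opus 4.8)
The plan is to combine the theoretical sieving criteria from Sections~\ref{earl} and~\ref{skew} with a finite computation organised by the value of $\omega(q-1)$. First I would invoke Theorem~\ref{lioncub} to obtain the crude sufficient condition $q > 4W^4(q-1)$ for membership of $\mathcal{T}$, and verify by a direct monotonicity estimate that this holds whenever $\omega(q-1) \geq 17$; here one uses that $W(q-1) = 2^{\omega(q-1)}$ grows like a fixed power while $q-1 \geq \prod_{i} p_i$ grows super-exponentially in $\omega(q-1)$, so the inequality is eventually automatic. Next, for the intermediate range $9 \leq \omega(q-1) \leq 16$, I would apply the sieved criterion (\ref{cheetah}) of Theorem~\ref{tiger} with a fixed modest choice of sieving primes (taking $s=5$), again reducing to a clean inequality of the form $q > 4\bigl(\tfrac{2s-1}{\delta_2}+2\bigr)^2 W^4(k)$ that can be checked to hold throughout this range. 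This disposes of all $q$ with $\omega(q-1) \geq 9$ purely theoretically.

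The remaining work is the finite search over $1 \leq \omega(q-1) \leq 8$. For each such value I would, as in the worked example for $\omega(q-1)=8$, choose the sieving parameter $s \in [1, \omega(q-1)-1]$ minimising the right-hand side of (\ref{cheetah}); this produces an upper bound $q_{\textrm{max}}$, while the lower bound $q_{\textrm{min}} = p_1 p_2 \cdots p_{\omega(q-1)}$ comes from the requirement that $q-1$ have the prescribed number of prime factors. One then enumerates the finitely many prime powers in $(q_{\textrm{min}}, q_{\textrm{max}})$ having exactly $\omega(q-1)$ distinct prime divisors, and for each computes the \emph{exact} value of $\delta_2$ for each admissible $s$ (typically much larger than the uniform worst-case bound) and tests whether (\ref{leopard}) holds. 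The surviving candidates form the set $\mathcal{C_T}$ of $3031$ prime powers recorded in Table~\ref{mountain}, of which $2942$ are prime and $89$ are proper prime powers; subtracting the excluded $q=2$ gives the stated count.

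Finally I would run Algorithm~\ref{algo_Te} on every $q \in \mathcal{C_T}$ with $\omega(q-1) < 7$, exploiting the reduction $r = a + wa^{-1}$ with $w = u^{-1}v$ so that primitivity of $ur$ is decided by a single gcd condition $\gcd(\log u + \log r, q-1)=1$ via discrete logarithms. This confirms directly that each such $q$, apart from those in $\mathcal{E_T}$, lies in $\mathcal{T}$; combining with $\mathcal{T} \subseteq \mathcal{S}$ (established in Section~\ref{intro}) gives membership of $\mathcal{S}$ as well. Counting the candidates in $\mathcal{C_T}$ that have $\omega(q-1) \geq 7$ and are not in $\mathcal{E_T}$ yields the bound of at most $182$ as-yet-unresolved values, with largest element $31{,}651{,}621$ read off from Table~\ref{mountain}.

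I expect the main obstacle to be the computational feasibility rather than the logical structure: the cases $\omega(q-1) = 6$ are genuinely expensive (the timing table shows roughly $20$ years of aggregate CPU time), so the key practical steps are the algorithmic optimisations in Algorithm~\ref{algo_Te} --- caching computed discrete logarithms and previously found primitive elements, and factoring out $u$ so that only $R$ residue classes need be swept rather than all $q-1$. The delicate theoretical point is verifying that the uniform inequalities genuinely cover the tails $\omega(q-1)\geq 9$; this requires care because $\delta_2$ can be small, but the worst case is controlled by taking the smallest admissible sieving primes, which I would check explicitly at the boundary $\omega(q-1)=9$.
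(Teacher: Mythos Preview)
Your proposal is correct and follows essentially the same approach as the paper: the reduction to a finite candidate set $\mathcal{C_T}$ via Theorems~\ref{lioncub} and~\ref{tiger} (handling $\omega(q-1)\geq 17$, then $9\leq\omega(q-1)\leq 16$ with $s=5$, then enumerating $\omega(q-1)\leq 8$) matches Section~\ref{existence} exactly, and the direct verification by Algorithm~\ref{algo_Te} for $\omega(q-1)<7$ together with the count $173+9=182$ from Table~\ref{mountain} is precisely how Section~\ref{comp_res} establishes the theorem. Two cosmetic slips: the $\mathcal{T}$ criteria come only from Section~\ref{earl}, not~\ref{skew}, and the $3031$ candidates already exclude $q=2$ (so $2942+89=3031$ with nothing to subtract).
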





\section{Improved algorithm to check whether $q\in\mathcal{T}$}
\label{Hillary}
We now introduce a new algorithm to handle the 182 possible exceptions annunciated in Theorem~\ref{final_T}. 

Let $L=\{\,0,1,\ldots,R-1\,\}$ be a complete set of residues modulo~$R$. For $i=1,\ldots,\omega(q-1)$, let
$L'_i=\{\,0,1,\ldots,p_i-1\,\}$ be a complete set of residues modulo~$p_i$, and let
\[
  L_{r,i} = \{\;l \,:\;l\in L \,\wedge\, l\not\equiv -\log r \bmod p_i\;\}, \quad  L'_{r,i} = \{\;l \,:\;l\in L'_i \,\wedge\, l\not\equiv -\log r \bmod p_i\;\}.
\]
Finally, let
\[
  L_r = \bigcap_{i=1}^{\omega(q-1)} L_{r,i}.
\]
By construction, the condition $\gcd\bigl( \log u + \log r , q-1 \bigr) = 1$ is equivalent to the condition
$(\log u \bmod R) \in L_r$. Furthermore, using the Chinese remainder theorem, the number $|L_r|$ of elements
of the set $L_r$ is given by
\begin{equation}
  |L_r| = \prod_{i=1}^{\omega(q-1)} |L'_{r,i}|.
  \label{e:setSize}
\end{equation}

The improved strategy used to check if $q\in\mathcal{T}$ is as follows. For each non-zero value of $w$, use distinct
primitive elements $a_1$, $a_2$, $\ldots$, to construct the corresponding sets $L_{r_1}$, $L_{r_2}$, $\ldots$,
stopping when either the list of primitive elements is exhausted, in which case $q\not\in\mathcal{T}$, or when
the union of these sets is $L$, in which case all non-zero values of $u$ have been covered and so the next non-zero
$w$ needs to be tried. When the $w$ values have been exhausted we conclude that $q\in\mathcal{T}$.

In an actual computer program, sets are usually implemented as arrays of bits\footnote{Each bit of the array
indicates if the corresponding element belongs or does not belong to the set.}, with unions and intersections being bitwise
logical \underline{or} or logical \underline{and} operations. In the present case the $L_r$ sets have $R$~bits, so
using an array of bits to represent them adds a factor of $R$ to the execution time of the program. It turns out that
using the inclusion-exclusion principle~\cite[Chapter~XVI]{HW78} to count the number of elements of a union of sets
gives rise to a considerably faster program. For example
\[
  |L_{r_1} \cup L_{r_2}|=|L_{r_1}|+|L_{r_2}|-|L_{r_1} \cap L_{r_2}|,
\]
so $|L_{r_1} \cup L_{r_2}|$ can be computed by evaluating $|L_{r_1}|$ and $|L_{r_2}|$ using~(\ref{e:setSize}), and
by evaluating the remaining term using
\[
  |L_{r_1} \cap L_{r_2}| = \prod_{i=1}^{\omega(q-1)} |L'_{r_1,i} \cap L'_{r_2,i}|.
\]
All three computations can be done using only the $L'_{r,i}$ sets. As these sets are quite small --- for those $q$, summarily described in Table~\ref{mountain}, with $q\geq 371,281$ that were not tested by the
methods of \S\ref{comp_res} the largest $p_i$ is only $89$ --- they should be implemented as bit arrays. As
these bit arrays can be stored in two 64-bit computer words, counting the number of elements of each one of them
can be done efficiently using the population count instruction available on modern Intel/AMD 64-bit processors.

In general, to count the number of elements in the union of the $n$ sets $L_{r_k}$, $k=1,\ldots,n$, by applying
recursively the inclusion-exclusion principle it is necessary to compute $2^n$~terms. Fortunately, in the present
case most of these terms turn out to be zero, because for a small $p_i$ the intersection of several $L'_{r_k,i}$
sets has a good chance to be the empty set. Nonetheless, to avoid an uncontrolled explosion of the number of
terms as more values of $r$ are considered, the following strategy was used to
accept/reject values of~$r$:
\begin{itemize}
  \item the first $10$ non-zero values of $r$ are always accepted;
  \item the remaining non-zero values of $r$ are accepted only if they lead to a $3/4$ reduction in the number of residue classes that are still not covered.
\end{itemize}
This fast but aggressive strategy failed in a very small percentage of cases (less than $0.003$\% for
$q=31,651,621$). When it failed the same procedure was tried again with the factor $3/4$ replaced by $4/5$. As this never failed for our list of values of~$q$, even more relaxed parameters (more initial
values of $r$ always accepted, larger factors) were not needed.

Denote by $B'_{r,i}$ the bit array of $p_i$ bits corresponding to the set $L'_{r,i}$. In a computer program the
set $L_{r_k}$ can be efficiently represented by the tuple $B'_r=(1,B'_{r,1},\ldots,B'_{r,\omega(q-1)})$, where the
initial $1$ represents the inclusion-exclusion generation number. Intersections of sets can be represented in the
same way, with the generation number reflecting the number of intersections performed. (To apply
inclusion-exclusion it is only necessary to keep track of the parity of the number of intersections.) As
mentioned before, intersecting two sets amounts to performing bitwise \underline{and} operations of the corresponding
$B'_{\cdot,i}$ bit arrays, which, given the small size of these arrays, can be done very quickly on contemporary
processors. In Algorithm~\ref{super_algo_T} the variable $B$ is a list of tuples that represent the non-empty sets
used in the inclusion-exclusion formula. The number of residues classes not yet covered by the union of the
$L_r$ sets, denoted by $|B|$, is given by
\[
  |B|=R+\sum_{B' \in B} (-1)^{\mathrm{generation}(B')}\,|B'|.
\]

These considerations give rise to Algorithm~\ref{super_algo_T} (in step 2, the list of primitive elements can be
constructed so that $a_{\phi(q-1)+1-k}$ is the inverse of $a_k$; that simplifies the computation of the values of
$r$.)

\begin{algorithm}[h]
\DontPrintSemicolon
\AlgoDontDisplayBlockMarkers
\SetAlgoNoEnd
\SetAlgoNoLine
\SetKwProg{Proc}{Procedure}{}{}%
\SetKwFunction{checkq}{check\_q}%
\SetKwFunction{checkw}{check\_w}%
\Proc{\checkq{$q$}}{
    \textit{Construct $\Fq$ and list $a_1,\ldots,a_{\phi(q-1)}$ of the primitive elements of $\Fq$}\;
    \For{each non-zero element $w$ of $\Fq$}{
        \If{\checkw{$w,10,\frac34$} returns 0}{
            \If{\checkw{$w,10,\frac45$} returns 0}{
                \If{\checkw{$w,12,\frac56$} returns 0}{
                    \If{\checkw{$w,\phi(q-1),1$} returns 0}{
		        FAIL\;
	            }
	        }
            }
        }
    }
}
\SetKwProg{Func}{Function}{}{}%
\SetKwFunction{checkw}{check\_w}%
\Func{\checkw{$w,nc,f$}}{
  $c \leftarrow 0$, $B \leftarrow \emptyset$\;
  \For{$1 \leq k \leq \phi(q-1)$}{
    $r \leftarrow a_k+wa^{-1}_k$\;
    \If{$r \not= 0$}{
      $c \leftarrow c+1$\;
      \textit{Compute $B'_r$}\;
      \textit{Intersect $B'_r$ with all sets stored in $B$ and store the non-empty ones in $X$}\;
      \textit{Append $B$ and $B'_r$ to $X$}\;
      \If{$c\leq nc$ or if $|X|\leq f|A|$}
      {
        $A \leftarrow X$\;
        \If{$|A|=0$}{
          \textit{return $1$}\;
        }
      }
    }
  }
  \textit{return $0$}\;
}
\caption{Check whether $q\in \mathcal{T}$}\label{super_algo_T}
\end{algorithm}

Algorithm~\ref{super_algo_T} gave rise to three optimised computer programs, written in the C programming language.
One that dealt with $q$ prime and $\max_i p_i < 64$, another that dealt with $q$ prime and $64 < \max_i p_i < 128$,
and a third one that dealt with $q$ a prime square and $\max_i < 64$. The largest case, $q=p=31,651,621$, was
confirmed to belong to $\mathcal{T}$ in one week on a single core of a $4.4$GHz i7-4790K Intel processor. All
exceptional values of $q$ not dealt with by the methods of \S\ref{comp_res} were confirmed to belong to
$\mathcal{T}$ in about four weeks of computer time (one week of real time, given that the i7-4790K processor has
four cores). These computations were double-checked on a separate machine.

Figure~\ref{execution_times} presents data for all cases that were tested by the first program. It
suggests that the execution time of the program is approximately proportional to $q$ and depends in a non-linear
way on $\omega(q-1)$. The same phenomenon occurs for the other two programs.

\begin{figure}[hpb]
  \centering
  \includegraphics[width=\textwidth]{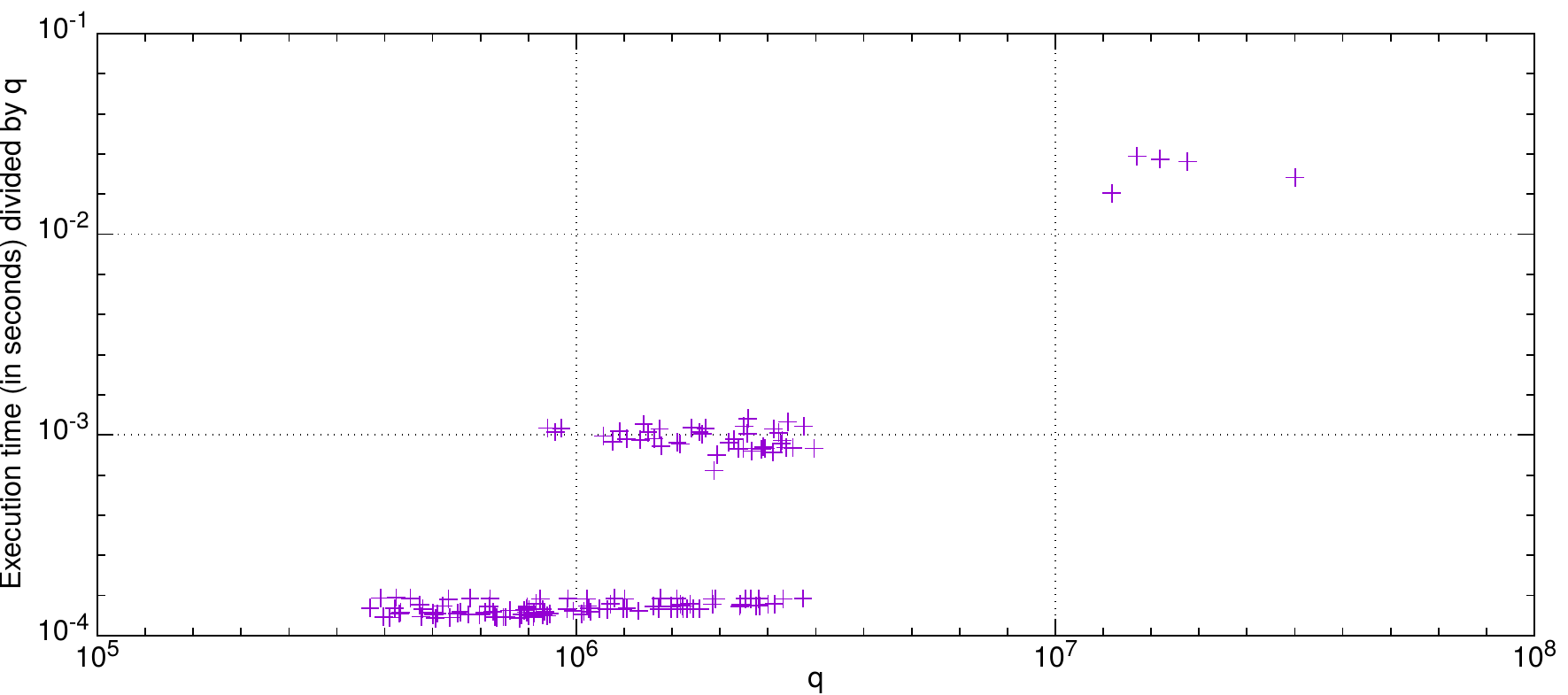}
  \caption{Execution times (in seconds) divided by $q$ versus $q$; the bottom data points correspond to values of
    $q$ for which $\omega(q-1)=6$, those in the middle correspond to $\omega(q-1)=7$ and those on top to
    $\omega(q-1)=8$.}%
  \label{execution_times}%
\end{figure}

\section*{Acknowledgements}
The third author acknowledges the Sydney Informatics Hub and the University of Sydney's high
performance computing cluster Artemis for providing the high performance computing
resources that contributed to the results in \S\ref{comp_res}.


\begin{thebibliography}{9}

\bibitem{magma223}
{W.~Bosma, J.~J.~Cannon, C.~Fieker, A.~Steel (eds)},
\emph{Handbook of {M}agma Functions {V}2.23} (2017):
\texttt{{h}ttp://magma.maths.usyd.edu.au/magma/handbook/}.

\bibitem{Co87}
S.~D.~Cohen.
\newblock Polynomials over finite fields with large order and level.
\newblock {\em Bull. Korean Math. Soc.}, 24(2),   83--96 (1987).

\bibitem{Co14}
S.~D.~Cohen.
\newblock Pairs of primitive elements in fields of even order.
\newblock {\em Finite Fields Appl.}, 28, 22--42 (2014).

\bibitem{COT}
S.~D.~Cohen, T.~Oliveira e Silva and T.~Trudgian.
\newblock A proof of the conjecture of Cohen and Mullen on sums of primitive roots.
\newblock {\em Math. Comp.}, 84(296), 2979--2986 (2015).

\bibitem{Ga}
D.~Garbe.
\newblock On the level of irreducible polynomials over finite fields.
\newblock {\em J. Korean Math. Soc.}, 22, 117--124 (1985).

\bibitem{HW78}
G.~H.~Hardy and E.~M.~Wright,
\newblock\emph{An Introduction to the Theory of Numbers},
\newblock 5th edition, Oxford Science Publications, 1978.

\bibitem{LiHa}
Jianghua Li and Di~Han.
\newblock Some estimate of character sums and its applications.
\newblock {\em J. Inequal. Appl.}, 328, 8pp. (2013).

\bibitem{LiLiPu}
Qunying Liao, Jiyou Li and Keli Pu.
\newblock On the existence for some special primitive elements in finite fields.
\newblock {\em Chin. Ann. Math.}, 37B(2), 259--266 (2016).


\bibitem{WaCaFe}
P. Wang, X. Cao and R. Feng.
\newblock On the existence of some specific elements in finite fields of characteristic 2.
\newblock {\em Finite Fields Appl.}, 18, 800--813 (2012).

\end{thebibliography}
\end{document}